\newtheorem{theorem}{Theorem}
\newtheorem{lemma}{Lemma}
\newcommand{\squeezeup}{\vspace{-2.5mm}}
\begin{document}
\title{Structure Learning and Statistical Estimation in Distribution Networks - Part II}
\author{\authorblockN{Deepjyoti~Deka*, Scott~Backhaus\dag, and Michael~Chertkov\ddag\\}
\authorblockA{*Corresponding Author. Electrical \& Computer Engineering, University of Texas at Austin\\
\dag MPA Division, Los Alamos National Lab\\ \ddag Theory Division and the Center for Nonlinear Systems, Los Alamos National Lab}
\thanks{D. Deka is with the Department of Electrical and Computer Engineering, The University of Texas at Austin, Austin, TX 78712. Email: deepjyotideka@utexas.edu}
\thanks{S. Backhaus is with the MPA Division of LANL, Los Alamos, NM 87544. Email: backhaus@lanl.gov}
\thanks{M. Chertkov is with the Theory Division and the Center for Nonlinear Systems of LANL, Los Alamos, NM 87544. Email: chertkov@lanl.gov}}
\maketitle

\begin{abstract}
Part I \cite{distgridpart1} of this paper discusses the problem of learning the operational structure of the grid from nodal voltage measurements. In this work (Part II), the learning of the operational radial structure is coupled with the problem of estimating nodal consumption statistics and inferring the line parameters in the grid. Based on a Linear-Coupled (LC) approximation of AC power flows equations, polynomial time algorithms are designed to complete these tasks using the available nodal complex voltage measurements. Then the structure learning algorithm is extended to cases with missing data, where available observations are limited to a fraction of the grid nodes. The efficacy of the presented algorithms are demonstrated through simulations on several distribution test cases.
\end{abstract}

\begin{IEEEkeywords}
Power Distribution Networks, Power Flows, Struture/graph Learning, Load estimation, Parameter estimation, Voltage measurements, Transmission Lines, Missing data.
\end{IEEEkeywords}
\section{Introduction}
\label{sec:intro}
The present power grid is separated into different tiers for optimizing its operations and control, namely the high voltage transmission system and the medium and low voltage distribution system. The distinction between these systems extends to their operational structure: the transmission system is a loopy graph while the distribution system operates as a radial network (set of trees). The larger volume of power transferred and higher magnitudes of resident voltages in the transmission network as compared to the distribution network have led grid security and reliability studies to focus primarily on the transmission side. Traditionally, the distribution grid has thus suffered from low placement of measurement devices leading to negligible real-time observation and control efforts \cite{hoffman2006practical}.

In Part I \cite{distgridpart1} of this paper, we study the design of low-complexity algorithms for learning the operational radial structure of the distribution grid despite available metering limited to nodal voltages. In this work, we extend the study to the problem of estimating other features of the distribution grid together with learning the operational structure. Specifically, we utilize available node complex voltages to learn the statistics of load profiles at the grid nodes and to estimate the complex-valued impedance parameters of the operational distribution lines. It is worth noting that line/edge based metering (line flow and breaker status measurements) are considered unavailable as they are seldom observed in real time in today's grids. Next, we extend the problem of learning the grid structure introduced in Part I to the case with partial observability, where voltage measurements pertaining to a subset of the nodes are not observed. In essence, the results from this work can aid several areas that have gained prominence with the expansion of smart grid. These include failure identification \cite{sharon2012topology}, grid reconfiguration \cite{baran1989network}, power flow optimization and generation scheduling \cite{hoffman2006practical,lopes2011integration,baran1989network,turitsyn2011options}, as well as privacy preserving grid operation \cite{liu2012cyber}. Furthermore, learning under partial observability enables the quantification of measurement security necessary to prevent adversarial learning aimed at hidden topological attacks \cite{kim2013topology,deka2014attacking}.

`Graph Learning' or `Graphical Model Learning' \cite{wainwright2008graphical} is a broad area of work that has been considered in different domains. In general graphs, maximum-likelihood has been employed for learning graph structures \cite{ravikumar2010high, anandkumar2011high, netrapalli2010greedy} through convex optimization as well as greedy techniques. In a learning study specific to general power grids \cite{kekatos2013grid} presents a maximum likelihood structure estimator (MLE) based on electricity prices. For radial distribution grids, the authors of \cite{bolognani2013identification} discuss structure learning through construction of a spanning tree based on the inverse covariance matrix (or concentration matrix) of voltage measurements, while \cite{sharon2012topology} studies topology identification with Gaussian loads through a maximum likelihood scheme.

In Part I \cite{distgridpart1}, an approach that uses provable trends in second moments of nodal voltage magnitudes to learn the grid structure was presented. Our algorithm design in part I assumes that all nodal loads are, in expectation, consumers of active and reactive power which is realistic for most, if not all, current distribution grids. Here in part II, we use a modified but not conflictive assumption of independence of fluctuations in active and reactive loads at different nodes.
As shown below, under this assumption one is not only able to reconstruct the grid structure but also able to infer either the statistics of active and reactive loads at every node or the values of impedance parameters at every operational line. Then, we show how to extend our structure learning algorithm to cases with missing data, where observations from a subset of nodes are not available to the observer. Similarly to Part I, the algorithms in here (Part II) are independent of the exact probability distribution of load profiles as well as variations in values of line parameters and are thus applicable to a wide range of operational conditions.

The rest of this manuscript (part II) is organized as follows. Section \ref{sec:tech_intro} contains a brief review of the radial structure of the grid, approximations of power flows and sets formulation of problems considered. Section \ref{sec:trends} contains proofs of our main results on second moments of voltage measurements in radial grids. Section \ref{sec:algo1} describes the algorithm design to learn the operational structure and estimate the statistics of load power profiles in the grid. An extension is also discussed for the problem of structure learning coupled with estimation of line impedances (instead of injection statistics).  In Section \ref{sec:missing} we present Algorithm $2$ that learns the operational radial structure in the presence of missing observations. Simulations results for our Algorithms on test radial distribution cases are presented in Section \ref{sec:experiments}. Finally, conclusions are discussed in Section \ref{sec:conclusions}.

\section{Technical Preliminaries}
\label{sec:tech_intro}
This Section provides a brief description of the operational structure of the distribution grid, and introduces the learning problems considered in Part II. We then have a brief reminder about the Linear Coupled Power Flow (LC-PF) model (already introduced and discussed in Part I) that we rely on for analysis in later Sections.

\textbf{Structure of Radial Distribution Network:} A distribution grid is represented by a graph ${\cal G}=({\cal V},{\cal E})$, where ${\cal V}$ (of size $N+K$) is the set of nodes/buses and ${\cal E}$ is the set of undirected edges/transmission lines. 
The complete layout of $\cal G$ is loopy, but its operational layout (denoted by $\cal F$) derived by excluding open/non-operational lines is a union of $K$ non-intersecting trees. Each grid tree ${\cal T}_k$ in $\cal F$ comprises of a single substation feeding electricity into load nodes lined along the `radial' tree. Thus, ${\cal F}$ is a $K$ \textit{\textbf{`base-constrained spanning forest'}} with $N$ non-substation nodes. See Fig. $1$ in Part I \cite{distgridpart1} for an illustrative example. The set of operational edges that contribute to the structure of the forest $\cal F$ is denoted by ${\cal E}^{\cal F}$ where ${\cal E}^{\cal F}\subset {\cal E}$. We follow the same notation as Part I and described in Table I of \cite{distgridpart1}.

\textbf{Summary of Learning Problems:} The majority of distribution grids operational today are handicapped by limited real time metering for breaker statuses and power flows \cite{hoffman2006practical}, as well as infrequent updating of model parameters. The grid operator (utility company) or an external observer/adversary in such a scenario is concerned with the following three tasks:
\begin{itemize}
\item[(1)] To learn the current configuration of switches that determine the `base-constrained spanning forest'.
\item [(2)] To learn the statistics of the power consumption\footnote{We use the term `power injection', `power consumption' and `load' interchangeably to denote power profile at each interior (non-substation) node of the distribution system.} profiles at the nodes.
\item [(3)] To learn the values of resistances and reactances of each operational  line of the distribution system.
\end{itemize}

For all these tasks, the utility or observer relies on available nodal complex voltage (magnitude and phase) readings. Task (1) is coupled with either Task (2) or Task (3) and considered first in the situation of full observability, when complex voltage (magnitude and phase) samples are available at all the nodes of the system. In fact, we show that voltage magnitude samples are sufficient to learn the grid structure (Task (1)), additional voltage phasor measurements are needed for the inference problems in Tasks (2) and (3). However, we also discuss Task (1) independently in the situation where several nodes do not offer any voltage readings. The problem formulations considered in Part I previously and in Part II are summarized in Table \ref{table1}.

\begin{table*}[ht]
\caption{Summary of Learning Problems/Statements}
\begin{center}
\begin{tabular}{|p{2.5cm}|p{4cm}|p{3.8cm}|p{3.2cm}|p{3cm}|}
\hline
Observations available & Prior Information & Assumptions & Features estimated & Results used \\\hline
Voltage magnitudes of all nodes & True second moment of nodal power injections, resistance and reactance of edges & Non-negative second moments of nodal power injections & Operational network structure& Algorithm $1$ in Part I \cite{distgridpart1}\\\hline
Voltage magnitudes of all nodes & None & Uncorrelated nodal power injections & Operational network structure (Task (1)) & Theorem \ref{Theorem1_LC}, Theorem \ref{Theorem4}, Algorithm $1$\\\hline
Voltage magnitudes and phasors of all nodes & Resistance and reactance of edges & Uncorrelated nodal power injections & Mean and variance of nodal power injections (Task (2))& Lemma \ref{LemmadiffsqLC}, Algorithm $1$\\\hline
Voltage magnitudes and phasors of all nodes & True variance of nodal power injection & Uncorrelated nodal power injections & Resistance and reactance of operational lines (Task (3))& Lemma \ref{LemmadiffsqLC}, Algorithm $1$\\\hline
Voltage magnitudes of subset of nodes & True variance of nodal power injections, resistance and reactance of edges & Uncorrelated nodal power injections, Missing nodes separated by three or more hops & Operational network structure& Theorem \ref{Theorem1_LC}, Lemma \ref{LemmadiffsqLC}, Algorithm $2$\\\hline
\end{tabular}
\end{center}
\label{table1}
\squeezeup
\end{table*}

The physics of Power Flows (PFs) in $\cal F$ forms the background for the learning/reconstruction problems sketched here. Variety of PF models/approximations  were discussed in details in Appendix $1$ and Section IIIA-C of Part I \cite{distgridpart1}. Let us briefly recap essential features of the Linear-Coupled Power Flow (LC-PF) model essential for analysis presented in the following Sections, also extending it with some new notations.

\textbf{Linear Coupled Power Flow (LC-PF):} Let $r^{\cal F}$ and $x^{\cal F}$ denote the diagonal matrices representing, respectively, line resistances and reactances for operational edges in forest ${\cal F}$. 
Let $N \times 1$ real valued vectors $p, q, \varepsilon$ and $\theta$ denote the active power injections, reactive power injections, voltage magnitude deviations and voltage phasors at the non-substation nodes, respectively. The LC-PF model is given by the matrix Eqs~(5,6) of Part I, where, $H_g$ and $H_{\beta}$ are edge-weighted reduced graph Laplacian matrices (after removing sub-station/slack buses) for forest ${\cal F}$ with edge weights given by the edge conductances and susceptances respectively. $M$ is the reduced directed incidence matrix with each row corresponding to a directed edge $(ab)$ in ${\cal E}^{\cal F}$. In fact, $M$ is block diagonal with $M=\mbox{diag}(M_1,M_2,\cdots, M_K)$, where each block ($M_i$) corresponds to a tree ${\cal T}_i$ in $\cal F$. Assuming that $p$ and $q$ in Eqs.~$(5,6)$ of Part I
are fluctuating, we derive the following relations involving the means $\mu_{x}$, and covariance matrices $\Omega_{xy}\doteq\mathbb{E}[(x-\mu_x)(y-\mu_y)^T]$ for variables $x$ and $y$.
\begin{align}
\mu_{\theta} &= H^{-1}_{1/x}\mu_p - H^{-1}_{1/r}\mu_q,~~\mu_\varepsilon = H^{-1}_{1/r}\mu_p + H^{-1}_{1/x}\mu_q\label{means}\\
\Omega_{\theta} &= H^{-1}_{1/x}\Omega_{p}H^{-1}_{1/x} + H^{-1}_{1/r}\Omega_qH^{-1}_{1/r}-H^{-1}_{1/x}\Omega_{pq}H^{-1}_{1/r} \nonumber\\
&~-H^{-1}_{1/r}\Omega_{qp}H^{-1}_{1/x}\label{angcovar1}\\
\Omega_{\varepsilon} &= H^{-1}_{1/r}\Omega_{p}H^{-1}_{1/r} + H^{-1}_{1/x}\Omega_qH^{-1}_{1/x}+H^{-1}_{1/r}\Omega_{pq}H^{-1}_{1/x}\nonumber\\
&~+H^{-1}_{1/x}\Omega_{qp}H^{-1}_{1/r}\label{volcovar1}\\
\Omega_{\theta\varepsilon} &= H^{-1}_{1/x}\Omega_{p}H^{-1}_{1/r} - H^{-1}_{1/r}\Omega_qH^{-1}_{1/x} + H^{-1}_{1/x}\Omega_{pq}H^{-1}_{1/x}\nonumber\\
&~- H^{-1}_{1/r}\Omega_{qp}H^{-1}_{1/r}\label{volangcovar1}\\
\Omega_{\varepsilon\theta} &= H^{-1}_{1/r}\Omega_{p}H^{-1}_{1/x} - H^{-1}_{1/x}\Omega_qH^{-1}_{1/r} + H^{-1}_{1/x}\Omega_{qp}H^{-1}_{1/x} \nonumber\\
&~- H^{-1}_{1/r}\Omega_{pq}H^{-1}_{1/r}\label{angvolcovar1}
\end{align}

It is worth mentioning that inclusion of both line resistances and reactances in the LC-PF model distinguishes it from the DC power flow models \cite{abur2004power} that has limited applicability in distribution grids. In the next Section, we derive key results relating second moments in phase angles and voltage magnitudes in the LC-PF for a radial distribution grid. Versions of all subsequent results can be generated for DC power flow models by simply ignoring line resistances or reactances as demonstrated in Part I.

\section{Second Moments of Voltages in Radial Grids}
\label{sec:trends}
Consider a tree ${\cal T}_k$ with reduced incidence matrix $M_k$. Let ${\cal E}_a^{{\cal T}_k}$ denote the unique path from node $a$ to the slack bus of the tree ${\cal T}_k$, where path between two nodes refers to the unique set of edges connecting them. As shown in Part I \cite{distgridpart1}, in a radial distribution gird, $H_{1/r}^{-1}$ has the following structure,
\begin{align}
 H_{1/r}^{-1}(a,b)&= \sum_{f} M^{-1}(a,f){r^{{\cal F}}}(f,f)M^{-1}(b,f)\nonumber\\
 &= \begin{cases} \sum_{(cd) \in {\cal E}_a^{{\cal T}_k}\bigcap {\cal E}_b^{{\cal T}_k}} r_{cd} \text{~~if nodes~} a,b \in {\cal T}_k\\
 0 ~~ \text{otherwise,} \end{cases}\label{Hrxinv}
\end{align}

Let $D^{{\cal T}_k}_a$ denote the set of descendants of node $a$ within the tree ${\cal T}_k$ where $b$ is called a descendent of $a$, if $a$ lies on the (unique) path from $b$ to the slack bus of ${\cal T}_k$. We include $a$ itself in the set of its descendants. Similarly, we call $b$ the parent of $a$ within ${\cal T}_k$ if $a$ is an immediate descendant of $b$ as illustrated in Fig \ref{fig:descendant}.
\begin{figure}[!bt]
\centering
\subfigure[]{\includegraphics[width=0.20\textwidth]{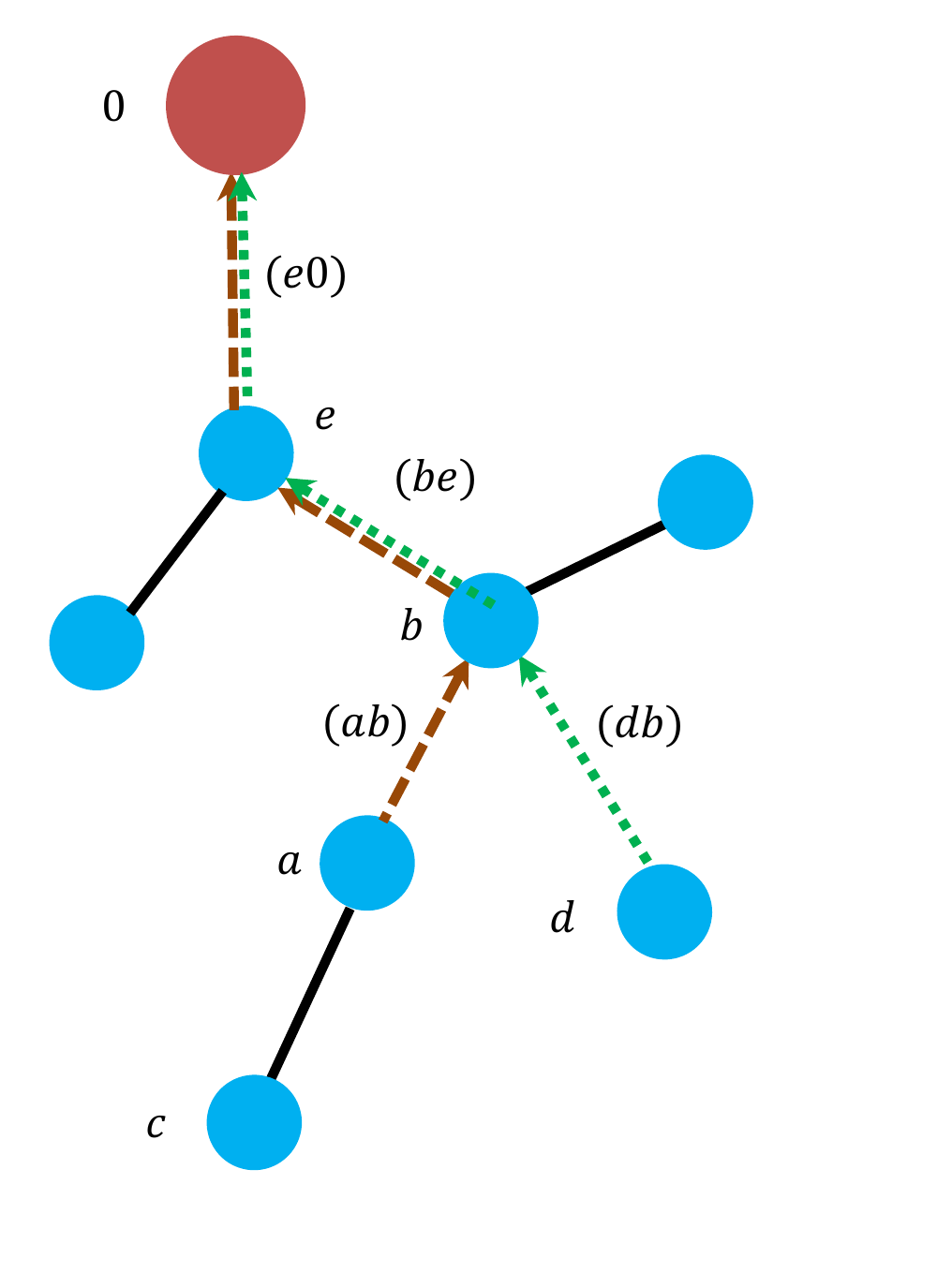}\label{fig:picHinv1}}\hspace{.6cm}
\subfigure[]{\includegraphics[width=0.20\textwidth]{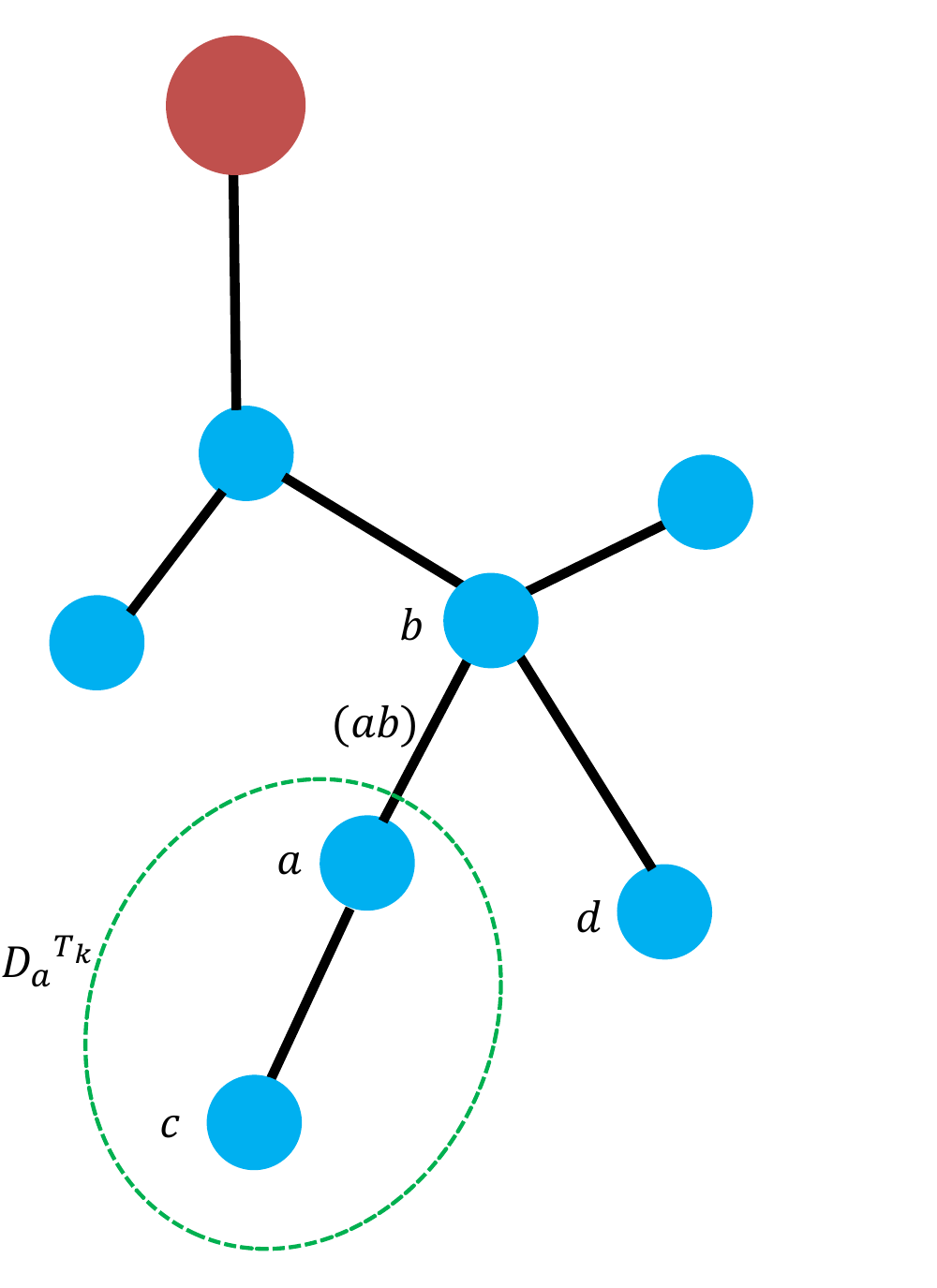}\label{fig:descendant}}
\squeezeup
\caption{Schematic layout of a distribution grid tree ${\cal T}_k$. The sub-station node represented by large red node is the slack bus. (a) Dotted lines represent the paths from nodes $a$ and $d$ to the slack bus. Here, $H_{1/r}^{-1}(a,d) = r_{be}+ r_{e0}$. (b) Here, nodes $a$ and $c$ are descendants of node $a$.
\label{fig:picHinv}}
\end{figure}

The following statement holds (see Lemma $1$ in \cite{distgridpart1} for detailed proof).
\begin{lemma}\label{Lemmadiff}
For two nodes, $a$ and its parent $b$, in tree ${\cal T}_k$
\begin{align} 
{\huge H}_{1/r}^{-1}(a,c)-{\huge H}_{1/r}^{-1}(b,c) &&=\begin{cases}r_{ab} & \quad\text{if node $c \in D^{{\cal T}_k}_a$}\\
0 & \quad\text{otherwise,} \end{cases} \label{Hdiff}
\end{align}
\end{lemma}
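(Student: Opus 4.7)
The plan is to exploit the path-sum interpretation of $H_{1/r}^{-1}(\cdot,\cdot)$ given in equation (\ref{Hrxinv}): for two nodes $u,v$ in the same tree ${\cal T}_k$, $H_{1/r}^{-1}(u,v)$ equals the total resistance along the common portion of the slack-paths ${\cal E}_u^{{\cal T}_k}$ and ${\cal E}_v^{{\cal T}_k}$, and vanishes if $u,v$ sit in different trees. I would split the argument according to where $c$ lies. First, dispose of the case in which $c$ is in a tree different from the one containing $a$ (and thus different from the one containing $b$): both $H_{1/r}^{-1}(a,c)$ and $H_{1/r}^{-1}(b,c)$ are zero, and such a $c$ cannot be a descendant of $a$, so the claimed difference of $0$ holds.

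Next, assume $a,b,c$ all lie in ${\cal T}_k$. The structural observation I would rely on is that, since $b$ is the parent of $a$, the edge $(ab)$ is the first edge on the slack-path from $a$, and hence
$$ {\cal E}_a^{{\cal T}_k} \;=\; \{(ab)\} \;\cup\; {\cal E}_b^{{\cal T}_k}. $$
If $c \in D^{{\cal T}_k}_a$, then by definition $a$ lies on ${\cal E}_c^{{\cal T}_k}$, and therefore so does $b$. This gives ${\cal E}_a^{{\cal T}_k} \subset {\cal E}_c^{{\cal T}_k}$ and ${\cal E}_b^{{\cal T}_k} \subset {\cal E}_c^{{\cal T}_k}$, so the intersections appearing in (\ref{Hrxinv}) differ by exactly the edge $(ab)$, yielding $H_{1/r}^{-1}(a,c) - H_{1/r}^{-1}(b,c) = r_{ab}$.

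In the remaining subcase $c \in {\cal T}_k$ with $c \notin D^{{\cal T}_k}_a$, the goal is to show $(ab) \notin {\cal E}_c^{{\cal T}_k}$. This is the only subtle point and leans on ${\cal T}_k$ being a tree: deleting the edge $(ab)$ separates the grid into the subtree rooted at $a$, which equals $D^{{\cal T}_k}_a$, and the component containing the slack bus. Therefore any slack-path that uses $(ab)$ must start inside $D^{{\cal T}_k}_a$. Since $c$ is not in $D^{{\cal T}_k}_a$, the edge $(ab)$ is absent from ${\cal E}_c^{{\cal T}_k}$, so ${\cal E}_a^{{\cal T}_k} \cap {\cal E}_c^{{\cal T}_k} = {\cal E}_b^{{\cal T}_k} \cap {\cal E}_c^{{\cal T}_k}$, and the two entries of $H_{1/r}^{-1}$ are equal.

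The main obstacle, modest though it is, is the tree-cut argument in the last case: one must translate the topological fact that the edge immediately above $a$ appears on a slack-path only for descendants of $a$ into a clean justification. Everything else reduces to bookkeeping on the path sums in (\ref{Hrxinv}).
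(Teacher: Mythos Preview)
Your argument is correct and is precisely the natural case split one obtains directly from the path-sum formula~(\ref{Hrxinv}). The paper does not actually prove Lemma~\ref{Lemmadiff} here but simply cites Lemma~1 of Part~I~\cite{distgridpart1}; given that Part~I establishes~(\ref{Hrxinv}) in the same way, your proof is almost certainly identical in substance to the one referenced.
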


Before the discussion of our results on trends in voltage covariances, we make the following assumption on the covariances of load consumption profiles.

\textbf{Assumption $1$:} Powers at different nodes are not correlated, while active and reactive powers at the same node are positively correlated. Thus, $\forall a,b \in \{1,...,N\}$
\begin{align}
\Omega_{qp}(a,a) > 0,~\Omega_p(a,b) = \Omega_q(a,b)= \Omega_{qp}(a,b) = 0 \nonumber
\end{align}

Few remarks are in order. First, the assumption of independence of fluctuations is realistic in general, reflecting diversity of individual consumer behavior on relatively short time scales. Second, unless consumer-level control of reactive power is implemented \cite{KostyaMishaPetrScott2} is implemented, fluctuations in active and reactive consumption/generation at the same node will have a strong tendency to align, giving positive correlation. Since, Assumption $1$ pertains to covariances (`centered' second moments), it does not run counter to the assumption in Part I, where `non-centered' second moments of power injections are considered to be positive. In fact, nodal loads (consumers of active and reactive power) satisfy both the assumptions given in Part I and Part II. Note that Assumption $1$ does not restrict individual nodal loads to follow any specific distribution.

The following result states that covariances of voltage magnitude deviations increase as we move farther away from the root of any tree in the grid.

\begin{theorem} \label{Theorem1_LC}
If node $a \neq b$ is a descendant of node $b$ on tree ${\cal T}_k$ in forest $\cal F$, then $\Omega_{\varepsilon}(a,a) > \Omega_{\varepsilon}(b,b)$.
\end{theorem}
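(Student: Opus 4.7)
The plan is to expand $\Omega_\varepsilon(a,a)$ and $\Omega_\varepsilon(b,b)$ via Eq.~\eqref{volcovar1} and apply Lemma~\ref{Lemmadiff} entrywise. First I would reduce to the case that $a$ is an immediate child of $b$: since descendance is the transitive closure of the parent-child relation, a chain $b = v_0, v_1, \ldots, v_m = a$ of parent-child links connects $b$ to $a$, and the desired inequality telescopes from the pointwise strict inequalities $\Omega_\varepsilon(v_{i+1}, v_{i+1}) > \Omega_\varepsilon(v_i, v_i)$.

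In the parent-child case, Assumption~$1$ makes $\Omega_p$, $\Omega_q$ and $\Omega_{pq} = \Omega_{qp}$ all diagonal, so Eq.~\eqref{volcovar1} collapses to
\begin{align*}
\Omega_\varepsilon(a,a) = \sum_{c} \bigl( & [H_{1/r}^{-1}(a,c)]^2 \Omega_p(c,c) + [H_{1/x}^{-1}(a,c)]^2 \Omega_q(c,c) \\
& + 2\,H_{1/r}^{-1}(a,c)\,H_{1/x}^{-1}(a,c)\,\Omega_{qp}(c,c)\bigr),
\end{align*}
and the same with $b$ in place of $a$. By Eq.~\eqref{Hrxinv}, every entry of $H_{1/r}^{-1}$ and $H_{1/x}^{-1}$ is a nonnegative sum of line parameters along a path intersection.

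Subtracting termwise, write $\alpha = H_{1/r}^{-1}(a,c)$, $\beta = H_{1/r}^{-1}(b,c)$, $\gamma = H_{1/x}^{-1}(a,c)$, $\delta = H_{1/x}^{-1}(b,c)$. Lemma~\ref{Lemmadiff} gives $\alpha - \beta = r_{ab}$ and $\gamma - \delta = x_{ab}$ when $c \in D_a^{{\cal T}_k}$, and $\alpha = \beta$, $\gamma = \delta$ otherwise. Hence summands for $c \notin D_a^{{\cal T}_k}$ vanish, while for $c \in D_a^{{\cal T}_k}$ one gets $\alpha^2 - \beta^2 = r_{ab}(\alpha+\beta) \geq 0$, $\gamma^2 - \delta^2 = x_{ab}(\gamma+\delta) \geq 0$, and, using the decomposition $\alpha\gamma - \beta\delta = (\alpha-\beta)\gamma + \beta(\gamma-\delta)$, a cross-term factor $r_{ab}\,\gamma + \beta\,x_{ab} > 0$. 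Combined with $\Omega_p(c,c),\Omega_q(c,c) \geq 0$ and $\Omega_{qp}(c,c) > 0$, every term in the difference is nonnegative, and the $c=a$ cross contribution---where $a \in D_a^{{\cal T}_k}$ and $\Omega_{qp}(a,a) > 0$---is strictly positive, delivering $\Omega_\varepsilon(a,a) > \Omega_\varepsilon(b,b)$.

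The one delicate step is the cross term: without the decomposition $\alpha\gamma - \beta\delta = (\alpha-\beta)\gamma + \beta(\gamma-\delta)$ it is not obvious that the $\Omega_{qp}$ piece has a definite sign, and without the strict inequality $\Omega_{qp}(a,a) > 0$ built into Assumption~$1$ one would get only a weak inequality in the degenerate case where active and reactive powers are deterministic. The parent-child reduction is pure bookkeeping; once in that case, Lemma~\ref{Lemmadiff} does essentially all the work.
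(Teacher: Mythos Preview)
Your proposal is correct and follows essentially the same approach as the paper: reduce to the parent--child case by transitivity, invoke Assumption~1 to make the injection covariance matrices diagonal, and apply Lemma~\ref{Lemmadiff} termwise. The only cosmetic difference is that the paper inserts the off-diagonal entry $\Omega^1_\varepsilon(a,b)$ as an intermediate step (showing $\Omega^1_\varepsilon(a,a) > \Omega^1_\varepsilon(a,b) > \Omega^1_\varepsilon(b,b)$ and summing) for each of the four blocks, whereas you compute the direct difference and handle the cross term via the identity $\alpha\gamma - \beta\delta = (\alpha-\beta)\gamma + \beta(\gamma-\delta)$; your treatment of that cross term is in fact more explicit than the paper's ``extending the same analysis.''
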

\begin{proof}
$\Omega_{\varepsilon}$ is given by Eq.~(\ref{volcovar1}) with four non-negative terms on the right side. Let the first term $H^{-1}_{1/r}\Omega_pH^{-1}_{1/r}$ be denoted by $\Omega^1_{\varepsilon}$. For one-hop neighbors, node $a$ and its parent $b$, we use Lemma \ref{Lemmadiff} to get
\begin{align}
\Omega^1_{\varepsilon}(a,a) - \Omega^1_{\varepsilon}(a,b) &= \sum_{c \in D^{{\cal T}_k}_a}H_{1/r}^{-1}(a,c)\Omega_p(c,c)r_{ab} > 0 \label{uselater1}\\
\Omega^1_{\varepsilon}(a,b) - \Omega^1_{\varepsilon}(b,b) &= \sum_{cc \in D^{{\cal T}_k}_a}H_{1/r}^{-1}(b,c)\Omega_p(c,c)r_{ab}> 0
\end{align}
Combining the inequalities, we get $\Omega^1_{\varepsilon}(a,a) > \Omega^1_{\varepsilon}(b,b)$. Extending the same analysis to the remaining three terms in Eq.~(\ref{volcovar1}) and then moving from one-hop neighbors to descendants proves the theorem.
\end{proof}

Next, we focus on the term $\mathbb{E}[(\varepsilon_a - \mu_{\varepsilon_a})-(\varepsilon_b-\mu_{\varepsilon_b})]^2 $, which is the expected value of the squared centered difference between two node voltage deviations ($\varepsilon$). For any two nodes $a$ and $b$ that lie on tree ${\cal T}_k$, we have
\begin{align}
\mathbb{E}[(\varepsilon_a - \mu_{\varepsilon_a})-(\varepsilon_b-\mu_{\varepsilon_b})]^2 &= \Omega_{\varepsilon}(a,a) - \Omega_{\varepsilon}(a,b) \nonumber\\&~+ \Omega_{\varepsilon}(b,b)- \Omega_{\varepsilon}(b,a) \nonumber
\end{align}
where $\Omega_{\varepsilon}$ is composed of four terms as given by Eq.~(\ref{volcovar1}). Using Eq.~\ref{uselater1} for each of the four terms within $\Omega_{\varepsilon}$ and adding them, we derive
\begin{align}
&\mathbb{E}[(\varepsilon_a - \mu_{\varepsilon_a})-(\varepsilon_b-\mu_{\varepsilon_b})]^2 =
\smashoperator[lr]{\sum_{c \in {\cal T}_k}}(H^{-1}_{1/r}(a,c)- H^{-1}_{1/r}(b,c))^2\Omega_p(c,c)\nonumber\\ &+(H^{-1}_{1/x}(a,c)- H^{-1}_{1/x}(b,c))^2 \Omega_q(c,c)+2\left(H^{-1}_{1/r}(a,c)- H^{-1}_{1/r}(b,c)\right)\nonumber\\
&\left(H^{-1}_{1/x}(a,c)- H^{-1}_{1/x}(b,c)\right)\Omega_{pq}(c,c) \label{usediff_1}
\end{align}

For the special case where node $b$ is the parent of node $a$, using Lemma \ref{Lemmadiff} in Eq.~(\ref{usediff_1}), we obtain
\begin{lemma} \label{LemmadiffsqLC}
If $b$ is $a$'s parent in tree ${\cal T}_k$,
\begin{align}
&\mathbb{E}[(\varepsilon_a - \mu_{\varepsilon_a})-(\varepsilon_b-\mu_{\varepsilon_b})]^2 = \sum_{c \in D^{{\cal T}_k}_a} r_{ab}^2\Omega_p(c,c) + x_{ab}^2\Omega_q(c,c)\nonumber\\
&~~~~~~~~~~~~~~~~~~~~~~~~~~~~~~~~~+ 2r_{ab}x_{ab}\Omega_{pq}(c,c)\label{test1}\\
& \mathbb{E}[(\theta_a - \mu_{\theta_a})-(\theta_b-\mu_{\theta_b})]^2 = \sum_{c \in D^{{\cal T}_k}_a} x_{ab}^2\Omega_p(c,c) + r_{ab}^2\Omega_q(c,c)\nonumber\\
&~~~~~~~~~~~~~~~~~~~~~~~~~~~~~~~~~- 2r_{ab}x_{ab}\Omega_{pq}(c,c)\label{test2}\\
&\mathbb{E}[(\varepsilon_a-\mu_{\varepsilon_a}-\varepsilon_b+\mu_{\varepsilon_b})(\theta_a - \mu_{\theta_a}-\theta_b+\mu_{\theta_b})] =\nonumber\\
&\smashoperator[lr]{\sum_{c \in D^{{\cal T}_k}_a}}r_{ab}x_{ab}(\Omega_p(c,c)-\Omega_q(c,c)) + (x_{ab}^2 -r_{ab}^2)\Omega_{pq}(c,c)\label{test3}\end{align}
\end{lemma}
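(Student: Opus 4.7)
The plan is to treat Lemma \ref{LemmadiffsqLC} as three parallel applications of the same two-step recipe: (i) expand $\varepsilon_a-\varepsilon_b$ and $\theta_a-\theta_b$ as linear combinations of the centered injections using Eqs.~(5,6) of Part I; (ii) evaluate the relevant second moment invoking Assumption $1$, which kills every off-diagonal covariance so that only the $c=c'$ contributions survive; and (iii) replace the surviving row-differences of $H^{-1}_{1/r}$ and $H^{-1}_{1/x}$ by $r_{ab}$ or $x_{ab}$ (and $0$ off the descendant set) via Lemma \ref{Lemmadiff}, which collapses the outer sum to $D^{{\cal T}_k}_a$.

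For Eq.~(\ref{test1}) the first two steps are already done in the excerpt: Eq.~(\ref{usediff_1}) is exactly the outcome of (i)--(ii) for the $\varepsilon$-difference, summed over every $c \in {\cal T}_k$. Step (iii) is then immediate: each factor $H^{-1}_{1/r}(a,c)-H^{-1}_{1/r}(b,c)$ becomes $r_{ab}$ on $D^{{\cal T}_k}_a$ and $0$ elsewhere, and analogously for $1/x$ with $x_{ab}$, so the three coefficients collapse to $r_{ab}^2$, $x_{ab}^2$, and $2 r_{ab} x_{ab}$, giving Eq.~(\ref{test1}).

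For Eq.~(\ref{test2}) I would redo the derivation of Eq.~(\ref{usediff_1}) starting from the angle covariance Eq.~(\ref{angcovar1}) rather than Eq.~(\ref{volcovar1}). Since $\theta = H^{-1}_{1/x} p - H^{-1}_{1/r} q$, the roles of $H^{-1}_{1/r}$ and $H^{-1}_{1/x}$ swap when we form $\theta_a-\theta_b$, and the $\Omega_{pq}$ cross term inherits a minus sign because the $q$-channel enters with opposite sign; steps (ii)--(iii) then run verbatim and produce Eq.~(\ref{test2}). For the mixed identity Eq.~(\ref{test3}) I would expand the product directly: letting $A_c = H^{-1}_{1/r}(a,c)-H^{-1}_{1/r}(b,c)$ and $B_c = H^{-1}_{1/x}(a,c)-H^{-1}_{1/x}(b,c)$, the centered differences are $\sum_c [A_c(p_c-\mu_{p_c}) + B_c(q_c-\mu_{q_c})]$ and $\sum_c [B_c(p_c-\mu_{p_c}) - A_c(q_c-\mu_{q_c})]$. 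Multiplying, taking expectation, and applying Assumption $1$ to drop $c\neq c'$ leaves $\sum_c [A_c B_c(\Omega_p(c,c)-\Omega_q(c,c)) + (B_c^2 - A_c^2)\Omega_{pq}(c,c)]$; one final use of Lemma \ref{Lemmadiff} truncates the sum to $D^{{\cal T}_k}_a$ and substitutes $A_c = r_{ab}$, $B_c = x_{ab}$, recovering Eq.~(\ref{test3}).

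The only non-routine aspect is sign bookkeeping in the cross identity, where the four expanded terms $A_c B_{c'}\,\tilde p \tilde p$, $-A_c A_{c'}\,\tilde p \tilde q$, $B_c B_{c'}\,\tilde q \tilde p$, $-B_c A_{c'}\,\tilde q \tilde q$ must be gathered so that the two $\Omega_{pq}$ pieces combine as $B_c^2 - A_c^2$ (and not $A_c^2 - B_c^2$) after using $\Omega_{pq}(c,c)=\Omega_{qp}(c,c)$; everything else is linear-algebraic substitution driven entirely by Lemma \ref{Lemmadiff} and the diagonal covariance structure of Assumption $1$.
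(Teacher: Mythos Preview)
Your proposal is correct and follows exactly the paper's approach: the paper derives Eq.~(\ref{usediff_1}) and then states that applying Lemma~\ref{Lemmadiff} to it yields Eq.~(\ref{test1}), while Eqs.~(\ref{test2}) and (\ref{test3}) ``can be derived through the same analysis as one leading to Eq.~(\ref{test1}).'' You have simply spelled out that same analysis in full detail, including the role swap of $H^{-1}_{1/r}$ and $H^{-1}_{1/x}$ for the angle identity and the explicit sign bookkeeping for the cross term, so there is no substantive difference between your argument and the paper's.
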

Eqs.~(\ref{test2}, \ref{test3}) can derived through the same analysis as one leading to Eq.~(\ref{test1}). Note that for each equation in Lemma \ref{LemmadiffsqLC}, the right side contains power covariance terms originating from the nodes in $D_a^{{\cal T}_k}$ alone. Thus, if the covariances of all descendants $c \neq a \in D_a^{{\cal T}_k}$ are known, Eqs.~(\ref{test1},\ref{test2},\ref{test3}) can be used to infer the three covariance quantities ($\Omega_p(a,a), \Omega_p(a,a),\Omega_{pq}(a,a)$) associated with node $a$. Furthermore, parameters ($r_{ab}, x_{ab}$) included in these equations pertain to the single operational line $(a,b)$. For the case where injection covariances $\Omega_p, \Omega_q$ are known from historical data, we can thus estimate the parameters of line $(a,b)$ as well as $\Omega_{pq}(a,a)$, the covariance between active and reactive injections at node $a$. We use these facts later in the text while designing our learning algorithms.

Next, we prove an important inequality involving the magnitude of $\mathbb{E}[(\varepsilon_a - \mu_{\varepsilon_a})-(\varepsilon_b-\mu_{\varepsilon_b})]^2$ on the grid nodes.

\begin{lemma} \label{Lemmacases}
For distinct nodes $a$, $b$ and $c$ that belong to the same tree ${\cal T}_k$, $\mathbb{E}[(\varepsilon_a - \mu_{\varepsilon_a})-(\varepsilon_b-\mu_{\varepsilon_b})]^2 < \mathbb{E}[(\varepsilon_a - \mu_{\varepsilon_a})-(\varepsilon_c-\mu_{\varepsilon_c})]^2$ holds for the following cases:
\begin{enumerate}
\item Node $a$ is a descendant of node $b$ and node $b$ is a descendant of node $c$ (see Fig.~\ref{fig:item1}),
\item Nodes $a$ and $c$ are descendants of node $b$ and the path from $a$ to $c$ passes through node $b$ (see Fig.~\ref{fig:item3}),
\end{enumerate}
\end{lemma}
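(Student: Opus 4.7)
The plan is to expand $\mathbb{E}[(\varepsilon_a-\mu_{\varepsilon_a})-(\varepsilon_b-\mu_{\varepsilon_b})]^2$ using Eq.~(\ref{usediff_1}) and compare it to the analogous expression for the pair $(a,c)$ term by term in the sum over $c'\in{\cal T}_k$. It is convenient to package each summand as a quadratic form $v_{ab}(c')^{T}\Sigma(c')\,v_{ab}(c')$, where $v_{ab}(c')\doteq(H^{-1}_{1/r}(a,c')-H^{-1}_{1/r}(b,c'),\;H^{-1}_{1/x}(a,c')-H^{-1}_{1/x}(b,c'))^{T}$ and $\Sigma(c')$ is the $2\times 2$ injection covariance at $c'$ with entries $\Omega_p(c',c')$, $\Omega_{pq}(c',c')$, $\Omega_q(c',c')$. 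Under Assumption~1 all entries of $\Sigma(c')$ are non-negative and $\Omega_{pq}(c',c')>0$. The key linear identity is the telescoping relation $v_{ac}(c')=v_{ab}(c')+v_{bc}(c')$, valid for every $c'$, which reduces the comparison to properties of $v_{bc}(c')$.

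For Case~1 I would exploit the path interpretation of $H^{-1}_{1/r}$ from Eq.~(\ref{Hrxinv}): each component of $v_{ab}(c')$ equals the sum of resistances (or reactances) along those edges of the $a$--$b$ path that also lie on the $c'$-to-root path. When $b$ is an ancestor of $a$ and $c$ is an ancestor of $b$, both the $a$--$b$ and $b$--$c$ segments are ancestor chains, so $v_{ab}(c')$ and $v_{bc}(c')$ have non-negative components at every $c'$. Because $v\mapsto v^{T}\Sigma(c')v$ is a quadratic with non-negative coefficients, it is componentwise non-decreasing on the non-negative orthant, hence summand$(a,c)\geq$ summand$(a,b)$ pointwise in $c'$. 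To promote this to a strict inequality I would evaluate at $c'=b$: the $b$-to-root path shares no edge with the $a$--$b$ path (giving $v_{ab}(b)=0$) but contains every edge of the $b$--$c$ path, so both components of $v_{bc}(b)$ equal the strictly positive total resistance and reactance of the $b$--$c$ segment. Combined with $\Omega_{pq}(b,b)>0$ this makes the summand at $c'=b$ for $(a,c)$ strictly positive, completing Case~1.

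For Case~2 I would use that $a$ and $c$ are descendants of distinct children $\alpha,\gamma$ of $b$, so the $a$--$b$ and $b$--$c$ paths live in disjoint subtrees of ${\cal T}_k$. A case split on the position of $c'$ shows that $v_{ab}(c')$ is supported on $D^{{\cal T}_k}_\alpha$ and $v_{bc}(c')$ on $D^{{\cal T}_k}_\gamma$, and at least one of them vanishes at every $c'\in{\cal T}_k$. The cross term in the expansion of $v_{ac}(c')^{T}\Sigma(c')v_{ac}(c')$ therefore vanishes identically, and summand$(a,c)-$summand$(a,b)$ collapses to $\sum_{c'\in D^{{\cal T}_k}_\gamma} v_{bc}(c')^{T}\Sigma(c')v_{bc}(c')$. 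Every term is non-negative, and the $c'=c$ contribution expands to the squared total $b$--$c$ resistance times $\Omega_p(c,c)$ plus the analogous reactance term plus a cross term proportional to $\Omega_{pq}(c,c)>0$, which is strictly positive and yields the required strict inequality.

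The main obstacle I anticipate is bookkeeping rather than any deep inequality: one must track carefully when $H^{-1}_{1/r}(u,c')-H^{-1}_{1/r}(v,c')$ vanishes versus saturates at the full path resistance depending on where $c'$ sits relative to the $\alpha$- and $\gamma$-subtrees and relative to the $a$--$b$ and $b$--$c$ chains. Sign handling in Case~2 deserves a moment's care, since $v_{bc}(c')$ is non-positive on $D^{{\cal T}_k}_\gamma$ (as $c$ is a descendant, not an ancestor, of $b$); however the quadratic form is insensitive to the overall sign of its argument, so the pointwise comparison still goes through.
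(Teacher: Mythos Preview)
Your argument is correct and follows essentially the same route as the paper: both proofs rely on the path interpretation of $H^{-1}_{1/r}$ from Eq.~(\ref{Hrxinv}), partition the summation in Eq.~(\ref{usediff_1}) according to the position of the running node relative to the descendant sets, and show the required termwise inequality (equality on the $a$-side subtree, strict gain on the $c$-side subtree, zero elsewhere). Your packaging as a quadratic form $v_{ab}(c')^{T}\Sigma(c')v_{ab}(c')$ together with the telescoping identity $v_{ac}=v_{ab}+v_{bc}$ is a tidy way to treat the three terms of Eq.~(\ref{usediff_1}) simultaneously, but it encodes exactly the comparisons the paper carries out component by component in Eqs.~(\ref{first})--(\ref{third2}).
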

\begin{figure}[!bt]
\centering
\subfigure[]{\includegraphics[width=0.20\textwidth]{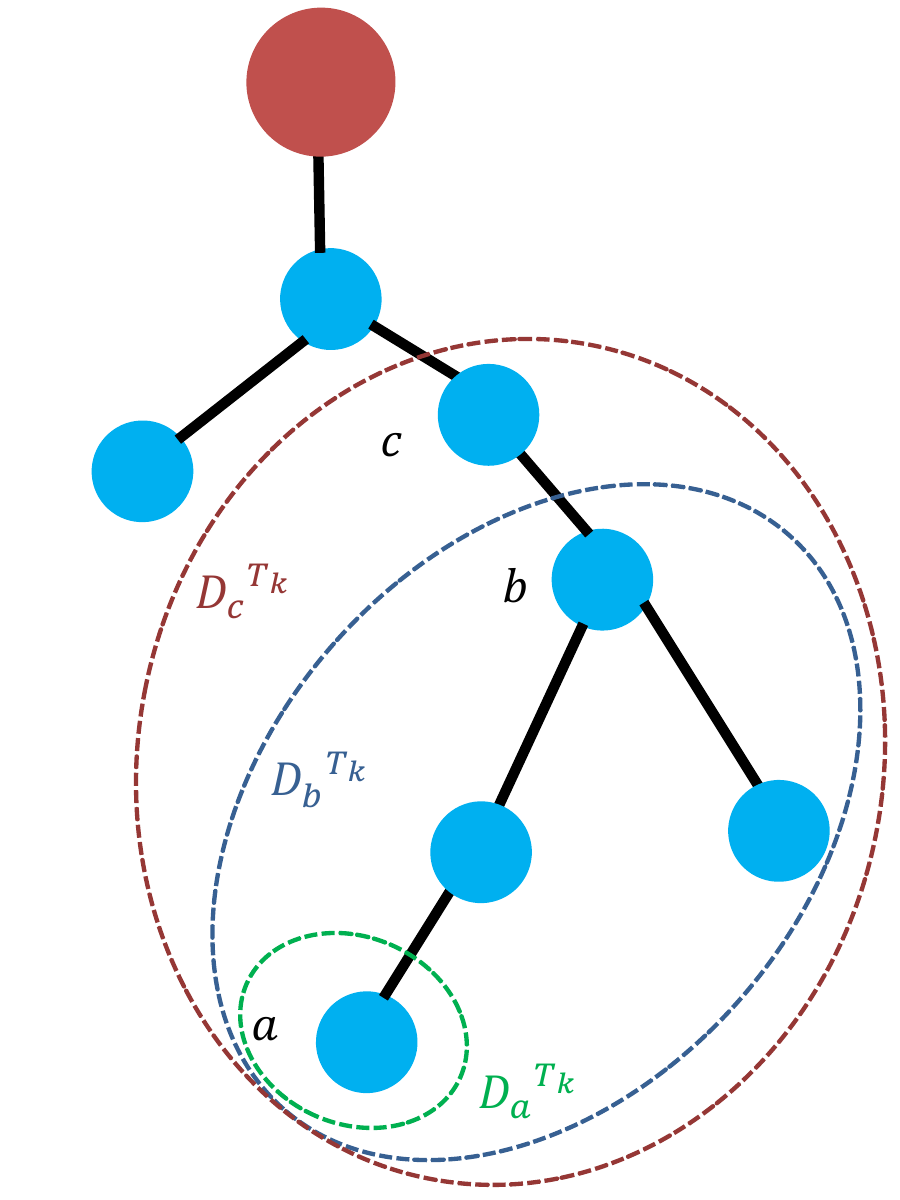}\label{fig:item1}}
\subfigure[]{\includegraphics[width=0.20\textwidth]{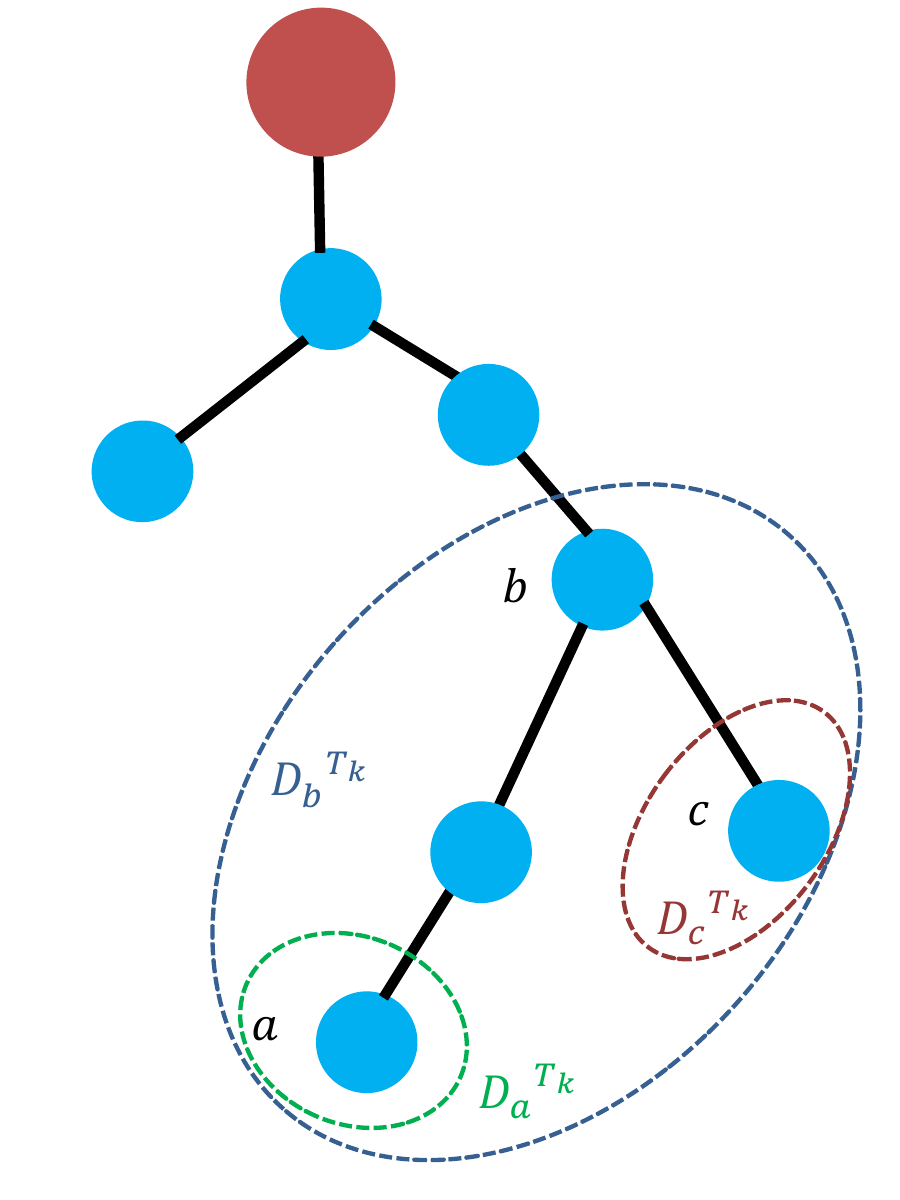}\label{fig:item3}}
\squeezeup
\caption{Schematic layout of a distribution grid tree ${\cal T}_k$. The sub-station node represented by large red node is the slack bus. $D_a^{{\cal T}_k}$ represents the set of nodes that are descendants of node $a$. (a) Node $a$ is a descendant of node $b$, while node $b$ is a descendant of node $c$. (b) Node $a$ and $c$ are descendants of node $b$ along disjoint sub-trees.
\label{fig:item}}
\end{figure}

\begin{proof}
Let us first prove the Lemma for the Case $1$. As shown in Fig.~\ref{fig:item1}, one observes
$D^{{\cal T}_k}_a \subseteq D^{{\cal T}_k}_b \subseteq D^{{\cal T}_k}_c$. Further, ${\cal E}_a^{{\cal T}_k}- {\cal E}_b^{{\cal T}_k} \subseteq {\cal E}_a^{{\cal T}_k}-{\cal E}_c^{{\cal T}_k}$, where ${\cal E}_a^{{\cal T}_k}$ represents edges traversed along the path leading from node $a$ to the root of ${\cal T}_k$. Consider a node $d$ in the tree ${\cal T}_k$. When $d \in D^{{\cal T}_k}_a$, one uses (\ref{Hrxinv}) to derive
\begin{align}
H^{-1}_{1/r}(a,d)-H^{-1}_{1/r}(b,d) &= \smashoperator[r]{\sum_{(ef) \in {\cal E}_a^{{\cal T}_k}-{\cal E}_b^{{\cal T}_k}}}r_{ef} < \smashoperator[r]{\sum_{(ef) \in {\cal E}_a^{{\cal T}_k}-{\cal E}_c^{{\cal T}_k}}}r_{ef}\nonumber\\
\Rightarrow ~H^{-1}_{1/r}(a,d)-H^{-1}_{1/r}(b,d) &< H^{-1}_{1/r}(a,d)-H^{-1}_{1/r}(c,d)\label{first}
\end{align}
Similarly, for node $d \in D^{{\cal T}_k}_b - D^{{\cal T}_k}_a$, one obtains
\begin{align}
&H^{-1}_{1/r}(a,d)-H^{-1}_{1/r}(b,d) = \smashoperator[lr]{\sum_{(ef) \in {\cal E}_a^{{\cal T}_k}\cap {\cal E}_{d}^{{\cal T}_k}-{\cal E}_b^{{\cal T}_k}}} r_{ef} ~~~~< \smashoperator[r]{\sum_{(ef) \in {\cal E}_a^{{\cal T}_k}\cap {\cal E}_{d}^{{\cal T}_k}-{\cal E}_c^{{\cal T}_k}} } r_{ef}\nonumber\\
\Rightarrow ~&H^{-1}_{1/r}(a,d)-H^{-1}_{1/r}(b,d) < H^{-1}_{1/r}(a,d)-H^{-1}_{1/r}(c,d)\label{second}
\end{align}
For $d \in D^{{\cal T}_k}_c - D^{{\cal T}_k}_b$, we arrives at
\begin{align}
&H^{-1}_{1/r}(a,d)-H^{-1}_{1/r}(b,d)= 0 < \smashoperator[r]{\sum_{(ef) \in {\cal E}_a^{{\cal T}_k}\cap {\cal E}_{d}^{{\cal T}_k}-{\cal E}_c^{{\cal T}_k}}} r_{ef}\nonumber\\
\Rightarrow~& H^{-1}_{1/r}(a,d)-H^{-1}_{1/r}(b,d) < H^{-1}_{1/r}(a,d)-H^{-1}_{1/r}(c,d)\label{third}
\end{align}
Next, using Eqs.~(\ref{first},\ref{second},\ref{third}), we arrive at
\begin{align}
&\forall d \in D^{{\cal T}_k}_c, H^{-1}_{1/r}(a,d)-H^{-1}_{1/r}(b,d) \leq H^{-1}_{1/r}(a,d)-H^{-1}_{1/r}(c,d)\label{combine}\\
&\forall d \not\in D^{{\cal T}_k}_c,H^{-1}_{1/r}(a,d)-H^{-1}_{1/r}(b,d),~ H^{-1}_{1/r}(a,d)-H^{-1}_{1/r}(c,d) = 0\label{combine1}
\end{align}
Similar inequalities hold for $H^{-1}_{1/x}$ as well. We can now apply Eqs.~(\ref{combine},\ref{combine1}) to Eq.~(\ref{usediff_1}) to prove $\mathbb{E}[(\varepsilon_a - \mu_{\varepsilon_a})-(\varepsilon_b-\mu_{\varepsilon_b})]^2 < \mathbb{E}[(\varepsilon_a- \mu_{\varepsilon_a})-(\varepsilon_c-\mu_{\varepsilon_c})^2]$ for Case $1$.

In the case $2$ (see Fig.~\ref{fig:item3}) nodes $a$ and $c$ are descendants of node $b$. Let $r_a$ be the penultimate (second to the last) node lying on the path from $a$ to $c$, and $r_c$ be the penultimate node on the path from $c$ to $b$. Here, $D^{{\cal T}_k}_{r_a}$ and $D^{{\cal T}_k}_{r_c}$ are disjoint subsets of $D^{{\cal T}_k}_b$. Then, for any $d_a \in D^{{\cal T}_k}_{r_a}$ and $d_c \in D^{{\cal T}_k}_{r_c}$, observe that ${\cal E}_{d_a}^{{\cal T}_k} \bigcap {\cal E}_{d_c}^{{\cal T}_k} = {\cal E}_b^{{\cal T}_k}$. This results in
\begin{align}
H^{-1}_{1/r}(b,d_a)-H^{-1}_{1/r}(a,d_a)&= H^{-1}_{1/r}(c,d_a)-H^{-1}_{1/r}(a,d_a) \label{first2}\\
H^{-1}_{1/r}(b,d_c)-H^{-1}_{1/r}(a,d_c)&=0 < H^{-1}_{1/r}(c,d_c)-H^{-1}_{1/r}(a,d_c)\label{second2}
\end{align}
Furthermore, for $d \not\in D^{{\cal T}_k}_{r_a} \bigcup D^{{\cal T}_k}_{r_c}$,
\begin{align}
H^{-1}_{1/r}(b,d)-H^{-1}_{1/r}(a,d)= 0 =H^{-1}_{1/r}(c,d)-H^{-1}_{1/r}(a,d) \label{third2}
\end{align}
Versions of Eqs.~(\ref{first2},\ref{second2},\ref{third2}) for $H^{-1}_{1/x}$ can be derived in a similar way. Using these results in Eq. \ref{usediff_1}, one arrives at $\mathbb{E}[(\varepsilon_a - \mu_{\varepsilon_a})-(\varepsilon_b-\mu_{\varepsilon_b})]^2 <\mathbb{E}[(\varepsilon_a- \mu_{\varepsilon_a})-(\varepsilon_c- \mu_{\varepsilon_c})]^2$ for Case $2$. This completes the proof.
\end{proof}

The following theorem follows directly from Lemma \ref{Lemmacases}.
\begin{theorem} \label{Theorem4}
For every node $a$ with set of descendants $D^{{\cal T}_k}_a$ and parent $b$, $b = \arg \min_{c \not\in D^{{\cal T}_k}_a} \mathbb{E}[(\varepsilon_a - \mu_{\varepsilon_a})-(\varepsilon_c-\mu_{\varepsilon_c})]^2.$
\end{theorem}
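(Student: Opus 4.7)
My plan is to deduce Theorem~\ref{Theorem4} from Lemma~\ref{Lemmacases} by a case analysis on where $c$ sits relative to $a$ and its parent $b$ inside the tree ${\cal T}_k$. Any candidate $c \neq b$ with $c \not\in D_a^{{\cal T}_k}$ lying in the same tree as $a$ falls into exactly one of three classes: (i) $c$ is a strict ancestor of $b$; (ii) the lowest common ancestor of $a$ and $c$ is $b$ itself, so $c$ lies in a sibling subtree of $a$ hanging off $b$; or (iii) the lowest common ancestor $\ell$ of $a$ and $c$ is a strict ancestor of $b$. These three classes are exhaustive, because any non-descendant of $a$ either lies on the path from $a$ to the root (class (i)) or branches off at some ancestor of $a$, and the only ancestor of $a$ that is not a strict ancestor of $b$ is $b$ itself.

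Classes (i) and (ii) each reduce to a single invocation of Lemma~\ref{Lemmacases}. Class (i) is precisely Case~$1$ of the lemma with the lemma's triple $(a,b,c)$ matching the theorem's $(a,b,c)$: $a$ descends from $b$, and $b$ descends from $c$, so the inequality follows. Class (ii) is Case~$2$ of the lemma under the same labeling, since $a$ and $c$ are both descendants of $b$ and the $a$--$c$ path passes through $b$. For class (iii) I plan to chain two invocations through the intermediate ancestor $\ell$: first, Case~$2$ applied to $(a,\ell,c)$ yields $\mathbb{E}[(\varepsilon_a-\mu_{\varepsilon_a})-(\varepsilon_\ell-\mu_{\varepsilon_\ell})]^2 < \mathbb{E}[(\varepsilon_a-\mu_{\varepsilon_a})-(\varepsilon_c-\mu_{\varepsilon_c})]^2$, because $a$ and $c$ descend from $\ell$ with the path passing through $\ell$; second, Case~$1$ applied to $(a,b,\ell)$ yields $\mathbb{E}[(\varepsilon_a-\mu_{\varepsilon_a})-(\varepsilon_b-\mu_{\varepsilon_b})]^2 < \mathbb{E}[(\varepsilon_a-\mu_{\varepsilon_a})-(\varepsilon_\ell-\mu_{\varepsilon_\ell})]^2$, because $a$ descends from $b$ and $b$ descends from $\ell$; composing the two gives the target inequality.

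The step I expect to be the main obstacle is class (iii), since Lemma~\ref{Lemmacases} never directly compares the parent $b$ against a node $c$ rooted in a distinct branch that attaches strictly above $b$; recognising that the lowest common ancestor $\ell$ acts as the correct pivot is what makes both cases of the lemma invokable with compatible roles. A secondary technicality arises if the $\arg\min$ is taken over $c$ in a different tree ${\cal T}_{k'} \neq {\cal T}_k$: the block-diagonal structure of $H^{-1}_{1/r}$ and $H^{-1}_{1/x}$ combined with Assumption~$1$ forces $\Omega_\varepsilon(a,c)=0$, so $\mathbb{E}[(\varepsilon_a-\mu_{\varepsilon_a})-(\varepsilon_c-\mu_{\varepsilon_c})]^2 = \Omega_\varepsilon(a,a)+\Omega_\varepsilon(c,c) \geq \Omega_\varepsilon(a,a)$, while a short direct computation using Lemma~\ref{Lemmadiff} on $H^{-1}_{1/r}(a,d)-H^{-1}_{1/r}(b,d)$ and the analogous identity for $H^{-1}_{1/x}$ shows $\mathbb{E}[(\varepsilon_a-\mu_{\varepsilon_a})-(\varepsilon_b-\mu_{\varepsilon_b})]^2 < \Omega_\varepsilon(a,a)$, closing this additional case.
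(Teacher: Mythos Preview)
Your argument is correct and follows essentially the same route as the paper: the paper phrases it as a two-step reduction (Case~2 of Lemma~\ref{Lemmacases} forces the minimiser onto the ancestor path of $a$, then Case~1 picks the parent $b$), which is exactly what your class~(iii) chaining through the lowest common ancestor $\ell$ makes explicit, while classes~(i) and~(ii) are the degenerate instances. Your additional treatment of the cross-tree case is a point the paper leaves implicit, so you are being slightly more careful there.
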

\begin{proof}
In the case $2$ of Lemma \ref{Lemmacases}, the optimal node for $\arg \min_{c \not\in D^{{\cal T}_k}_a} \mathbb{E}[(\varepsilon_a - \mu_{\varepsilon_a})-(\varepsilon_c-\mu_{\varepsilon_c})]^2$ exists on the path from node $a$ to the root. Considering case $1$, one finds that the optimal node on that path is node $a$'s parent $b$.
\end{proof}
Theorem \ref{Theorem4} implies that among all non-descendants of a node, the minimum expected squared centered difference of voltage magnitude deviations is achieved at its parent node. Indeed in the next Section, we utilize this result to identify a node's parent.

\section{Learning Grid Structure with Estimation of Load or Parameters}
\label{sec:algo1}
We first present our algorithm design for Tasks $1$ and $2$, structure learning coupled with estimation of nodal power injection statistics. Next, we look at solving for Tasks $1$ and $3$, structure learning coupled with estimation of line parameters.

\subsection{Learning Structure and Injection Statistics}
\label{subsec:task12}
The results of the previous Section (specifically, Theorem \ref{Theorem1_LC}, Lemma \ref{LemmadiffsqLC} and Theorem \ref{Theorem4}) provide the machinery for the algorithm design. Algorithm $1$ learns the radial operational structure (Task $1$) as well as estimates the mean $\mu_{p}$ and covariance $\Omega_{p}$ of the power injections at the load nodes (Task $2$). The observer here is assumed to be aware of the load nodes that are connected directly to the grid sub-stations. This is necessary as the assignment of substations, one per tree in forest $\cal F$ cannot be uniquely determined. This occurs due to the assumption of zero fluctuations of voltage magnitude and phase at substations which makes the relations involving voltage deviations in the previous section insensitive when the substation is the parent node. Resistance and reactance parameters of all lines (open and operational) are assumed known here.

\begin{algorithm}
\caption{Base Constrained Spanning Forest Learning with Estimation of Load Statistics:}
\textbf{Input:} $m$ phase angle and voltage deviation observations $\theta^j$ and $\varepsilon_i, 1\leq j \leq m$, all line resistances $r$ and line reactances $x$\\
\textbf{Output:} Covariance Matrices $\Omega_{p}$, $\Omega_q$ and $\Omega_{pq}$, mean vectors $\mu_{p}$ and $\mu_q$\\
\begin{algorithmic}[1]
\State Compute $\mu_{\theta_a} = \sum_{j = 1}^m\theta^j_a/m, \mu_{\varepsilon_a} = \sum_{j = 1}^m\varepsilon^j_a/m, \Omega_{\theta}(a,a) = \sum_{j = 1}^m\theta^j_a\theta^j_a/m-\mu^2_{\theta_a}$ and $\Omega_{\varepsilon}(a,a) = \sum_{j = 1}^m\varepsilon^j_a\varepsilon^j_a/m-\mu^2_{\varepsilon_a}$ for all nodes $a$.
\State Undiscovered Set $U \gets \{1,2,...,N\}$, Leaf Set $L \gets \phi$, Descendant Covariance vectors $D^p \gets \textbf{0}, D^q \gets \textbf{0}$, $D^{pq} \gets \textbf{0}$.
\While {($U \neq \phi)$}
\State $b^* \gets \max_{b \in U} \Omega_{\varepsilon}(b,b)$ \label{step3_1}
 \ForAll{$a \in L$}
\If {$b^* = \arg\min_{c \in U}\sum_{j = 1}^m[(\varepsilon^j_a - \mu_{\varepsilon_a})-(\varepsilon^j_c-\mu_{\varepsilon_c})]^2/m$} \label{step3_2}
\State Draw edge between nodes $a$ and $b^*$
\State Solve Eqs.~(\ref{test1},\ref{test2},\ref{test3}) to get $\Omega_{p}(a,a)$, $\Omega_{q}(a,a)$ and $\Omega_{pq}(a,a)$ \label{step3_3}
\State $D^p(b^*) \gets D^p(b^*) + \Omega_{p}(a,a) + D^p(a)$
\State $D^q(b^*) \gets D^q(b^*) + \Omega_{q}(a,a) + D^q(a)$
\State $D^{pq}(b^*) \gets D^{pq}(b^*) + \Omega_{pq}(a,a) + D^{pq}(a)$
\State $L \gets L - \{a\}$ \label{step3_4}
\EndIf
\EndFor
 \State $L \gets L \bigcup \{b^*\}$ \label{step3_5}
\EndWhile
\State Generate $H_{1/x}$ and $H_{1/r}$ from edges
\State Solve $\mu_{\theta} = H^{-1}_{1/x}\mu_p - H^{-1}_{1/r}\mu_q,~~ \mu_\varepsilon = H^{-1}_{1/r}\mu_p + H^{-1}_{1/x}\mu_q$ \label{step3_6}
\end{algorithmic}
\end{algorithm}

\textbf{Algorithm Overview:} In each iteration, the node $b^*$ with the highest variance in voltage deviation among the yet undiscovered node set $U$ is selected in Step \ref{step3_1}. Theorem \ref{Theorem1_LC} ensures that selecting nodes in the decreasing order of their variances leads to discovery of node $b^*$ only after all its descendants have been discovered previously. Set $L$ denotes the current set of leaves (previously discovered nodes with unknown parents). In Step \ref{step3_2}, the selected node $b^*$ is made the parent of a node in set $L$ if the condition in Theorem \ref{Theorem4} is satisfied. Here, each entry in the descendant covariance vectors $D^p, D^q$ and $D^{pq}$ contains the sum of load power covariances over all descendants of each node, other than the node itself. The values of covariance matrices of power injections for $b^*$ are inferred in Step \ref{step3_3} using Lemma \ref{LemmadiffsqLC}. Steps \ref{step3_4} and \ref{step3_5} are used to update the current set of leaves $L$ for use in the next iteration. Finally, in Step \ref{step3_6}, the mean of nodal power is computed using the measurement matrix $H$ constructed from the grid structure. Note that instead of learning the covariances in $\Omega_P$ sequentially through Step \ref{step3_3}, one can use the generated measurement matrix $H$ directly to learn them together at the end. \\

\textbf{Algorithm Complexity:} Computing empirical covariance matrix of voltage deviation is considered to be a part of pre-processing and is thus ignored in the complexity estimation. One makes $N$ iterations to select all the non-substation nodes. Within each iteration, an edge selection (Step \ref{step3_2}) calls for a check with each node in $L$. Thus, the worst-case complexity for learning the structure is $O(N^2)$. Computing the means and the covariances is of complexity $O(N^2)$ through matrix multiplication.

Observe that learning the forest structure in Algorithm $1$ relies on voltage magnitude deviation measurements alone, and in fact does not require knowledge of line parameters in the grid. Phase measurements and values of line resistance and reactance are needed only to estimate the means and covariances of power injections.

\subsection{Learning Structure and Line Parameters}
The first goal of the observer here is the same as in Section \ref{subsec:task12} -  to learn the operational grid structure. However, we consider a modified scenario where the covariances for active and reactive nodal injections ($\Omega_p$ and $\Omega_q$) are already known from historical data and thus do not need to be estimated. Instead, the observer here aims at estimating the impedance parameters ($r_{ab}$ and $x_{ab}$) for each operational line $(a,b)$ within the grid. Consider Eqs.~(\ref{test1},\ref{test2},\ref{test3}). If matrix $\Omega_{pq}$ is also known, the observer can easily solve these linear equations with $r_{ab}$, $x_{ab}$ and $(r_{ab}x_{ab})$ as the three unknowns to estimate the impedance for each operational edge. However, $\Omega_{pq}$ may be harder to obtain in reality as its computation requires time-synchronized historical samples of active and reactive injections. If information on $\Omega_{pq}$ is unavailable, variables $r_{ab}, x_{ab}$ and $\Omega_{pq}(a,a)$ form three nonlinear Eqs.~(\ref{test1},\ref{test2},\ref{test3}) for each edge $(a,b)$. Note that Algorithm $1$ infers the radial grid structure iteratively upward from the descendant nodes to the parents. Therefore, we also infer line parameters ($r_{ab}, x_{ab}$) and $\Omega_{pq}(a,a)$ by solving Eqs~(\ref{test1},\ref{test2},\ref{test3}) in each iteration for the newly discovered edge $(a,b)$ between node $a$ and its parent $b$ in tree ${\cal T}_k$. Let $A$, $B$ and $C$ denote the expressions on the left side of Eqs.~(\ref{test1},\ref{test2},\ref{test3}) respectively. From  Eqs.~(\ref{test1},\ref{test2}), we derive, $r_{ab}^2 +x_{ab}^2 = \left(A + B\right)/\left(\sum_{c \in D^{{\cal T}_k}_a}\Omega_p(c,c) + \Omega_q(c,c)\right)$. We can now eliminate terms involving $x_{ab}$ and $\Omega_{pq}$ to get Eq.~(\ref{quadratic}) which is a quadratic expression in $r_{ab}^2$. We use it to infer $r_{ab}$ and $x_{ab}$. To infer $\Omega_{pq}(a,a)$, we use values of $\Omega_{pq}(c,c)$ for descendants $c (\neq a)$ of node $a$ that are determined in previous iterations.
\begin{table*}[bt]
\begin{align}
r_{ab}^4\left((A -B)^2 +4C^2\right) + \frac{(A +B)^2(A\smashoperator[lr]{\sum_{c \in D^{{\cal T}_k}_a}}\Omega_p(c,c)-B\smashoperator[lr]{\sum_{c \in D^{{\cal T}_k}_a}}\Omega_q(c,c))^2}{(\smashoperator[lr]{\sum_{c \in D^{{\cal T}_k}_a}}\Omega_p(c,c)+\Omega_q(c,c))^4}= 2r^2\frac{(A\smashoperator[lr]{\sum_{c \in D^{{\cal T}_k}_a}}\Omega_p(c,c)-B\smashoperator[lr]{\sum_{c \in D^{{\cal T}_k}_a}}\Omega_q(c,c))(A^2-B^2) + 2C^2(A+B)(\smashoperator[lr]{\sum_{c \in D^{{\cal T}_k}_a}}\Omega_p(c,c)+\Omega_q(c,c))}{(\smashoperator[lr]{\sum_{c \in D^{{\cal T}_k}_a}}\Omega_p(c,c)+\Omega_q(c,c))^2} \label{quadratic}
\end{align}
\end{table*}

Every step in this algorithm, except modified Step \ref{step3_3}, corresponds to respective step in Algorithm $1$. The Step \ref{step3_3} is modified such that Eqs.~(\ref{quadratic}), followed from (\ref{test1}), are used to derive the line parameters and $\Omega_{pq}$. As this algorithm formulation and analysis follows Algorithm $1$, we omit it for brevity. In the next Section, we discuss a critical extension of the structure learning problem (Task $1$) to the case where the available nodal data is incomplete due to some missing entries.

\section{Learning Base-Constrained Spanning Forest with Missing Data}
\label{sec:missing}
The structure learning problem discussed in the preceding Section (Task (1)) requires the observer to have voltage magnitude data for all nodes within the distribution grid. However, this may not be true in practice. In fact, loss of communication and/or synchronization troubles with meters over short periods of time, along with meter breakdowns over longer time-scales, can result in missing data over a set ${\cal M}$ of missing nodes in the system. We assume here that the ``missing" nodes are positioned within the grid not fully arbitrarily, but they satisfy the following property.

\textbf{Assumption $2$:} Missing nodes in set ${\cal M}$ are separated by greater than two hops in the distribution grid forest and they are not immediate children (not first descendants) of the sub-station nodes.
\begin{figure}[!bt]
\centering
\subfigure[]{\includegraphics[width=0.20\textwidth]{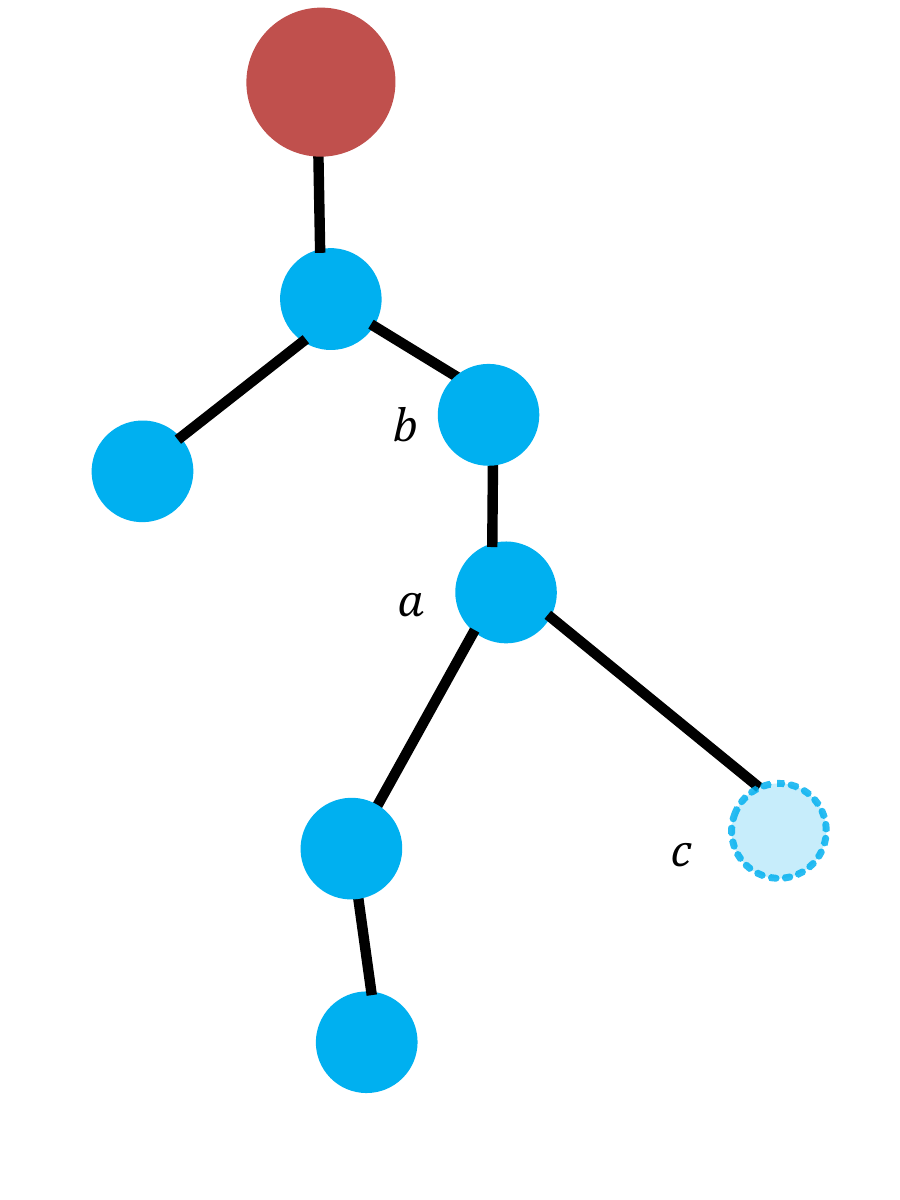}\label{fig:missing1}}
\subfigure[]{\includegraphics[width=0.20\textwidth]{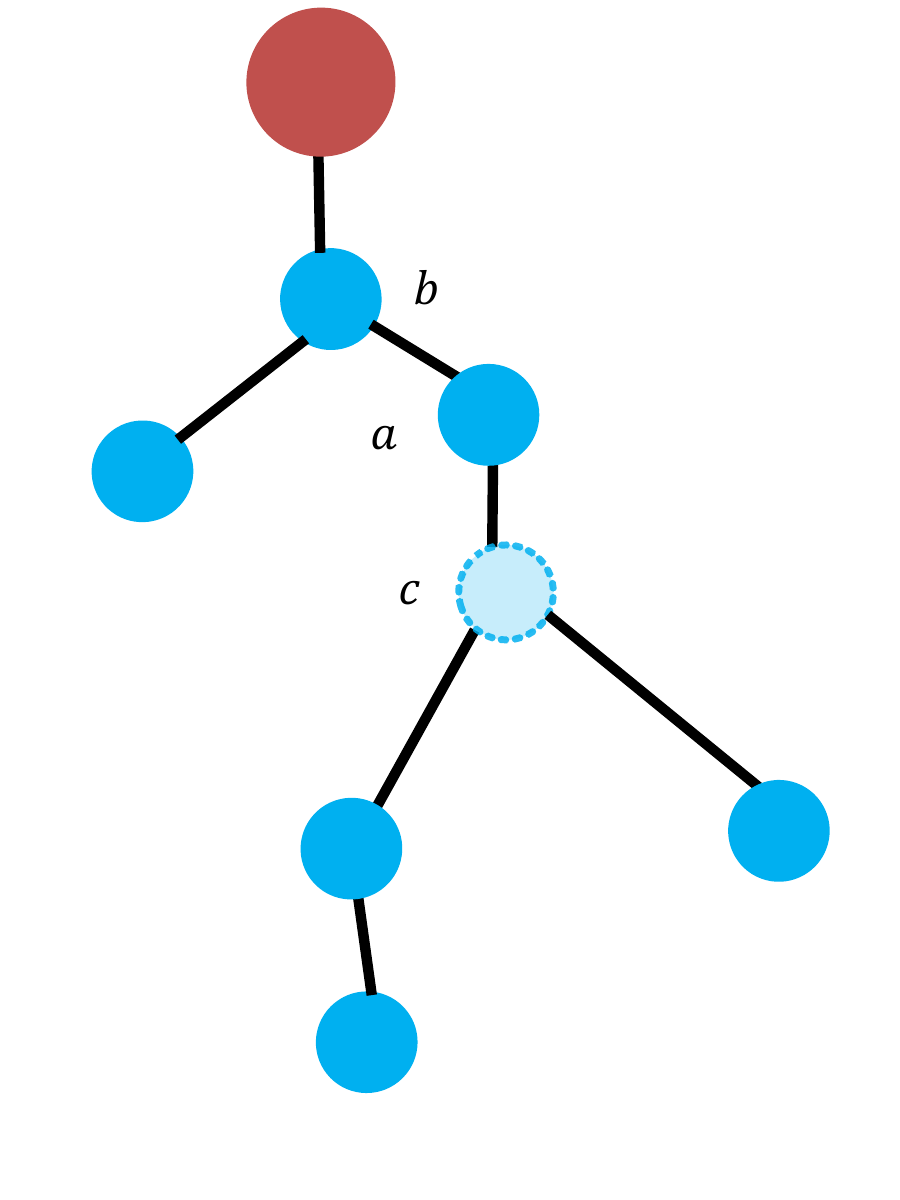}\label{fig:missing2}}
\squeezeup
\caption{Schematic layout of a distribution grid tree ${\cal T}_k$ with missing node $c$. The sub-station node, shown as the large red node, is the slack bus. (a) Missing node $c$ is a leaf with parent $a$. (b) Missing node $c$ is an intermediate node with parent node $a$ and grandparent node $b$.
\label{fig:missing}}
\end{figure}
This assumption implies that there exists no observed node which is connected to more than one missing node. Note that a missing node can exist in either of the two possible configurations -  a leaf or an intermediary position - as illustrated in Fig.~\ref{fig:missing}. Assumption $2$ guarantees that in either case, both the parent and grandparent (parent of the parent) nodes of the missing node are observed. Additionally, unlike structure learning in Task (1), in this section we assume that information, e.g. estimated or originating from historical measurements, on the actual values of $\Omega_p, ~\Omega_q$ and $\Omega_{pq}$ covariance matrices  and impedances of all lines is available. We now construct Algorithm $2$ to learn the operational grid structure in the presence of a missing set ${\cal M}$ with nodes whose voltage magnitude deviations are unknown.
\begin{algorithm*}[t]
\caption{Base Constrained Spanning Forest Learning with Missing Data}
\textbf{Input:} True $\Omega_p$ and $\Omega_q$, $m$ voltage deviation observations $\varepsilon_i, 1\leq j \leq m$ for nodes in set ${\cal M}$, all line resistances $r$ and line reactances $x$, Missing nodes Set: ${\cal M}$\\
\begin{algorithmic}[1]
\State Compute $\mu_{\varepsilon_a} = \sum_{j = 1}^m\varepsilon^j_a/m$, and $\Omega_{\varepsilon}(a,a) = \sum_{j = 1}^m\varepsilon^j_a\varepsilon^j_a/m-\mu^2_{\varepsilon_a}$ for all observed nodes $a$.
\State Undiscovered Set $U \gets \{1,2,...,N+K\}$, Leaf Set $L \gets \phi$, Unconnected Descendant Sets $D_a \gets \phi \forall$ nodes $a$, Child Active and reactive Covariance vectors $D^p \gets \textbf{0}, ~D^q \gets \textbf{0}$
\While {($U \neq \phi)$}
 \State $b^* \gets \max_{b \in U} \Omega_{\varepsilon}(b,b)$ \label{step6_1}
   \ForAll{$a \in L$}
       \If {$b^* = \arg\min_{c \in U}\sum_{j = 1}^m[(\varepsilon^j_a - \mu_{\varepsilon_a})-(\varepsilon^j_c-\mu_{\varepsilon_c})]^2/m$} \label{step6_2}
     \If {$D_a =\phi$}
            \If {$\smashoperator[lr]{\sum_{j=1}^m}\frac{[(\varepsilon^j_a-\mu_{\varepsilon_a})-(\varepsilon^j_{b^*}-\mu_{\varepsilon_{b^*}})]^2}{m} = r_{ab}^2(\Omega_p(a,a)+D^p(a)) + x_{ab}^2(\Omega_q(a,a) +D^q(a))+ 2r_{ab}x_{ab}(\Omega_{pq}(a,a)$\\
            \hfill $+D^{pq}(a))$}  \label{step6_3}
                \State Draw edge between nodes $a$ and $b^*$
                \State $D^p(b^*) \gets D^p(b^*) + \Omega_{p}(a,a) + D^p(a),~D^q(b^*) \gets D^q(b^*) + \Omega_{q}(a,a) + D^q(a)$
                \State $D^{pq}(b^*) \gets D^{pq}(b^*) + \Omega_{pq}(a,a) + D^{pq}(a)$
                \State $L \gets L - \{a\}$
            \Else
                \If {$\exists d \in {\cal M}$ such that $\sum_{j=1}^m[(\varepsilon^j_a-\mu_{\varepsilon_a})-(\varepsilon^j_{b^*}-\mu_{\varepsilon_{b^*}})]^2/m = x_{ab}^2(\Omega_p(a,a)+D^p(a)+ \Omega_p(d,d))$ \\
               \hfill $+ r_{ab}^2(\Omega_q(a,a) +D^q(a)+ \Omega_q(d,d)) + 2r_{ab}x_{ab}(\Omega_{pq}(a,a) +D^{pq}(a)+\Omega_{pq}(d,d))$}
                    \State Draw edges between nodes $a$ and $b^*$, and $a$ and $d$
                    \State $D^p(b^*) \gets D^p(b^*) + \Omega_{p}(a,a) + D^p(a)+ \Omega_{p}(d,d),~D^q(b^*) \gets D^q(b^*) + \Omega_{q}(a,a) + D^q(a)+ \Omega_{q}(d,d)$
                    \State $D^{pq}(b^*) \gets D^{pq}(b^*) + \Omega_{pq}(a,a) + D^{pq}(a)+ \Omega_{pq}(d,d)$
                    \State $L \gets L - \{a\}$, ${\cal M} \gets {\cal M} - \{d\}$  \label{step6_4}
                 \Else
                     \State $D_{b^*} \gets D_{b^*}  \cup \{a\}, ~D^p(b^*) \gets D^p(b^*) + \Omega_{p}(a,a) + D^p(a),~D^q(b^*) \gets D^q(b^*) + \Omega_{q}(a,a) + D^q(a)$
                     \State $D^{pq}(b^*) \gets D^{pq}(b^*) + \Omega_{pq}(a,a) + D^{pq}(a)$
                     \State $L \gets L - \{a\}$
                 \EndIf
             \EndIf
        \Else
             \State Find $d \in {\cal M}$ such that $\sum_{j=1}^m[(\varepsilon^j_a-\mu_{\varepsilon_a})-(\varepsilon^j_{b^*}-\mu_{\varepsilon_{b^*}})]^2/m = x_{ab}^2(\Omega_p(a,a)+D^p(a)+ \Omega_p(d,d))$ \label{step6_5}
             \State \hfill $+ r_{ab}^2(\Omega_q(a,a) +D^q(a)+ \Omega_q(d,d))+ 2r_{ab}x_{ab}(\Omega_{pq}(a,a) +D^{pq}(a)+\Omega_{pq}(d,d))$
                    \State Draw edges between nodes $a$ and $b^*$, and nodes in $D_a$ and $d$
                    \State $D^p(b^*) \gets D^p(b^*) + \Omega_{p}(a,a) + D^p(a)+ \Omega_{p}(d,d),~D^q(b^*) \gets D^q(b^*) + \Omega_{q}(a,a) + D^q(a)+ \Omega_{q}(d,d)$
                    \State $D^{pq}(b^*) \gets D^{pq}(b^*) + \Omega_{pq}(a,a) + D^{pq}(a)+ \Omega_{pq}(d,d)$
                    \State $L \gets L - \{a\}$, ${\cal M} \gets {\cal M} - \{d\}$
         \EndIf
         \EndIf
    \EndFor
    \State $L \gets L \bigcup \{b^*\}$ \label{step6_6}
\EndWhile
\end{algorithmic}
\end{algorithm*}

\textbf{Algorithm Overview:} The construction of each operational tree begins by picking node $b^*$ with the largest value of covariance in the voltage deviation (Step \ref{step6_1}) and then advancing along the Algorithm sequentially. Here, the current leaf set $L$ denotes the set of discovered nodes with yet unknown parents. For every node $a$ in $L$, we observe its set of unconnected descendants $D_a$. Here $D_a$ is empty if all of $a$'s non-leaf children (immediate descendants) are known and have been linked to it. Note that $a$ may be a parent to a missing leaf node despite $D_a$ being empty. Thus, if $D_a$ is empty, first Step \ref{step6_3} checks if the selected node $b^*$ is the parent to node $a$ with all children discovered by using Eq.~\ref{test1}. If no link is found, then Step \ref{step6_4} checks if node $b^*$ and node $a$ are connected with a missing leaf node $c$ linked to $a$ in the configuration shown in Fig.~\ref{fig:missing1}. If still no link is found, the Algorithm stores $a$ as an unconnected descendant of $b^*$ in $D_{b^*}$. On the other hand, if $D_a$ is non-empty, the algorithm confirms, in Step \ref{step6_5}, existence of a missing intermediate node $c$ with parent node $a$ and grandparent node $b^*$ in the configuration shown in Fig.~\ref{fig:missing2}. One of these three checks is guaranteed to find an edge due to Assumption $3$. Following this, a new node is selected in the next iteration. The Algorithm completes when the set $U$ becomes empty. Since no child (immediate descendant) of substation nodes are missing (Assumption $3$), $U = \phi$ implies inclusion of all the missing nodes into the grid structure (${\cal M} = \phi$).

\textbf{Algorithm Complexity:} Following the complexity analysis of Algorithm $1$, we estimate,  counting number of possible comparisons, that the worst case complexity of the Algorithm $2$ is $O((N-|\cal M|)^2|\cal M|)$.

\section{Experiments}
\label{sec:experiments}
We test the performance of Algorithms $1$ and $2$ on three distribution grid test cases \cite{radialsource} listed in Table \ref{table_testcases} and described in detail in Part I \cite{distgridpart1}.
\begin{table}[ht]
\caption{Summary of the tested distribution grids}
\begin{center}
\begin{tabular}{|p{1cm}|p{3cm}|p{2cm}|p{1cm}|}
\hline
Test Case & Number of buses / substations / tie-switches & Additional Non-operational lines & Source\\
\hline
$bus\_13\_3$ & $13/3/3$ & $10$ & \cite{testcase1} \\\hline
$bus\_29\_1$ & $29/1/1$ & $20$ & \cite{testcase2} \\\hline
$bus\_83\_11$ & $83/11/13$ & $30$ & \cite{testcase3} \\\hline
\end{tabular}
\end{center}
\label{table_testcases}
\end{table}

For each experiment here, we pick an operational spanning forest layout $\cal F$ from the loopy grid graph $\cal G$ of a test system by opening the additional lines as well as the tie-switches. For this configuration, we choose statistics of consumption at each load node (we consider Gaussian for all experiments) and use it to generate multiple samples of nodal power injection. For each vector-valued sample, we fix voltages at the substations and run power flows to derive voltage magnitudes and phases at every node. Then, we compute empirical correlation functions of phases and voltages, averaging over all the generated samples. Finally, a valid observation set is created by hiding all the operational information other than what is required as input. Then, we run our algorithms and compare the resulting reconstruction with the actual operational case.

We start by simulating Algorithm $1$. For brevity, we present results on learning the grid structure with inference of load statistics (and not inference of line impedance parameters). Here, the observer has access to phases and voltage magnitudes at all the nodes as input. As noted in Table \ref{table1}, voltage magnitudes are sufficient for reconstructing the grid structure, but inference of load statistics require both voltage magnitude and phase measurements. Fig.~\ref{fig:algo4mean} and Fig.~\ref{fig:algo4covar} show the change in the average fractional errors for estimating means and covariance of the nodal injections with increasing number of samples for the three test systems considered. For both estimated quantities, the fractional errors are stated in terms of the difference between the true and estimated values relative to the true values. It is clear from the Figures that the average fractional error decays exponentially with the number of samples. Comparing Fig.~\ref{fig:algo4adj} with Figs.~\ref{fig:algo4mean} and \ref{fig:algo4covar}, we see that the number of samples required to accurately reconstruct the topology is much less than for reconstructing the nodal power distributions. Only when the number of samples is less than $100$ does the reconstruction of the topology begin to suffer.

\begin{figure}[!bt]
\centering
\subfigure[]{\includegraphics[width=0.42\textwidth,height = .35\textwidth]{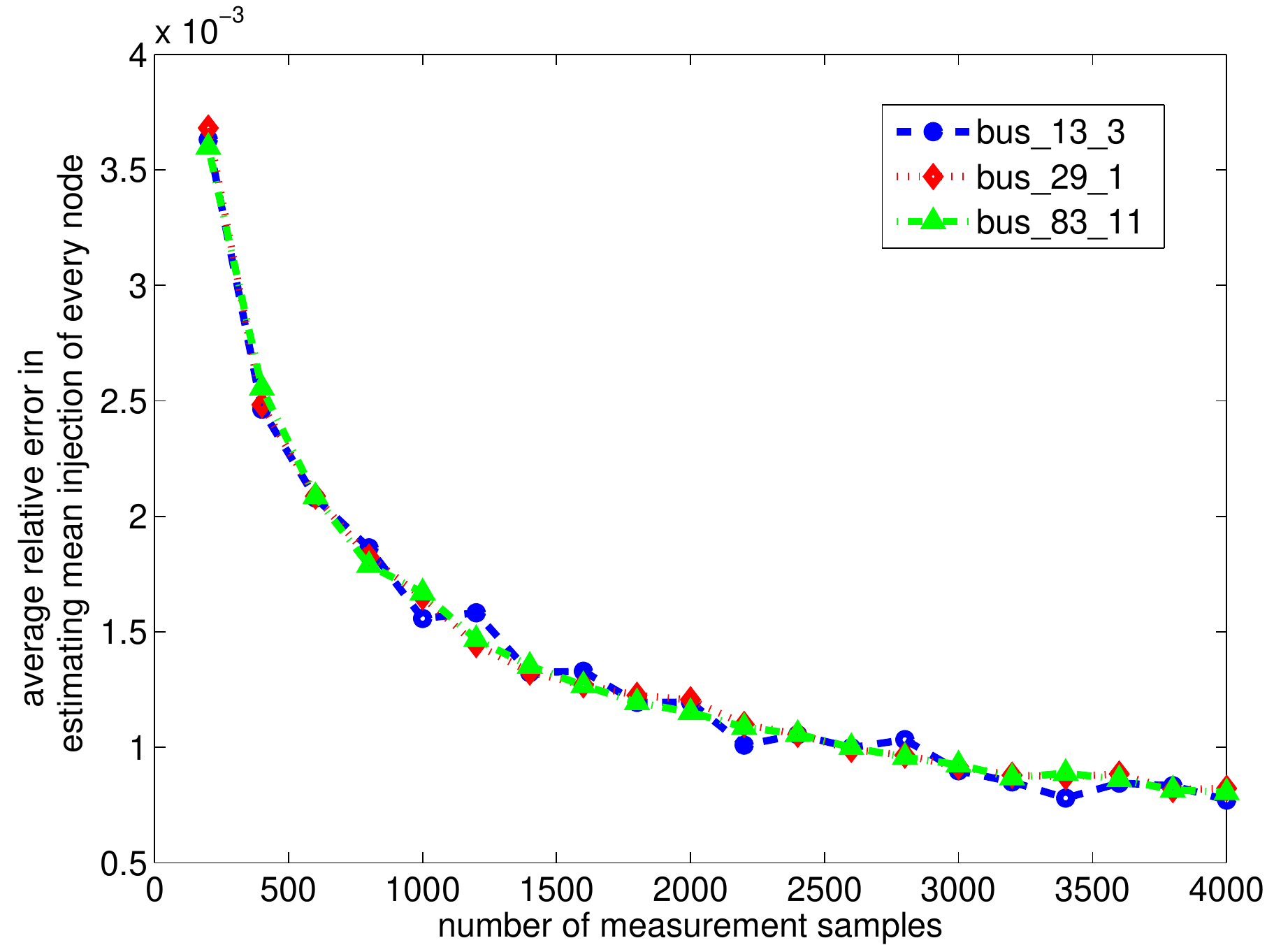}\label{fig:algo4mean}}
\subfigure[]{\includegraphics[width=0.42\textwidth,height = .35\textwidth]{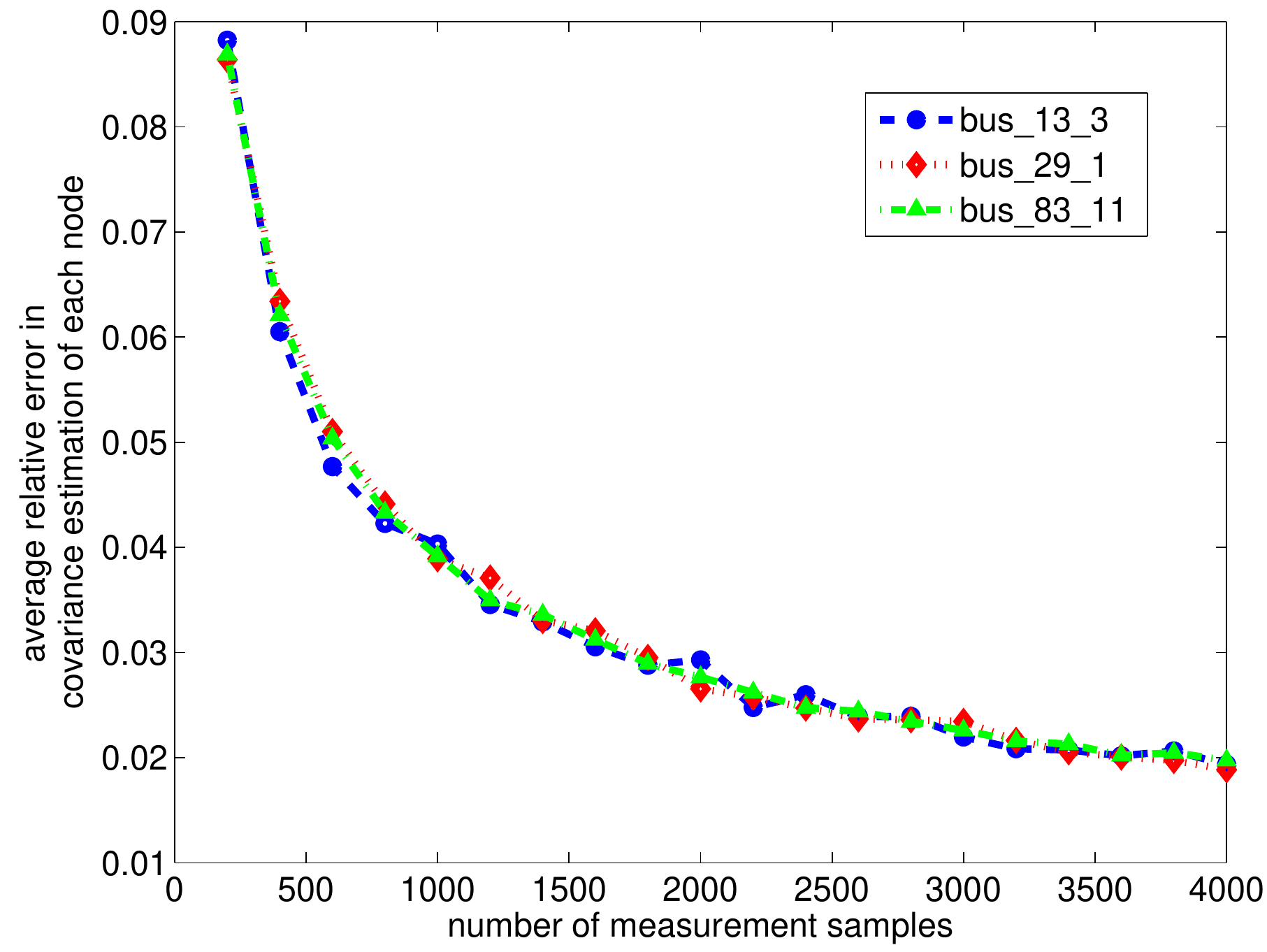}\label{fig:algo4covar}}
\subfigure[]{\includegraphics[width=0.42\textwidth,height = .35\textwidth]{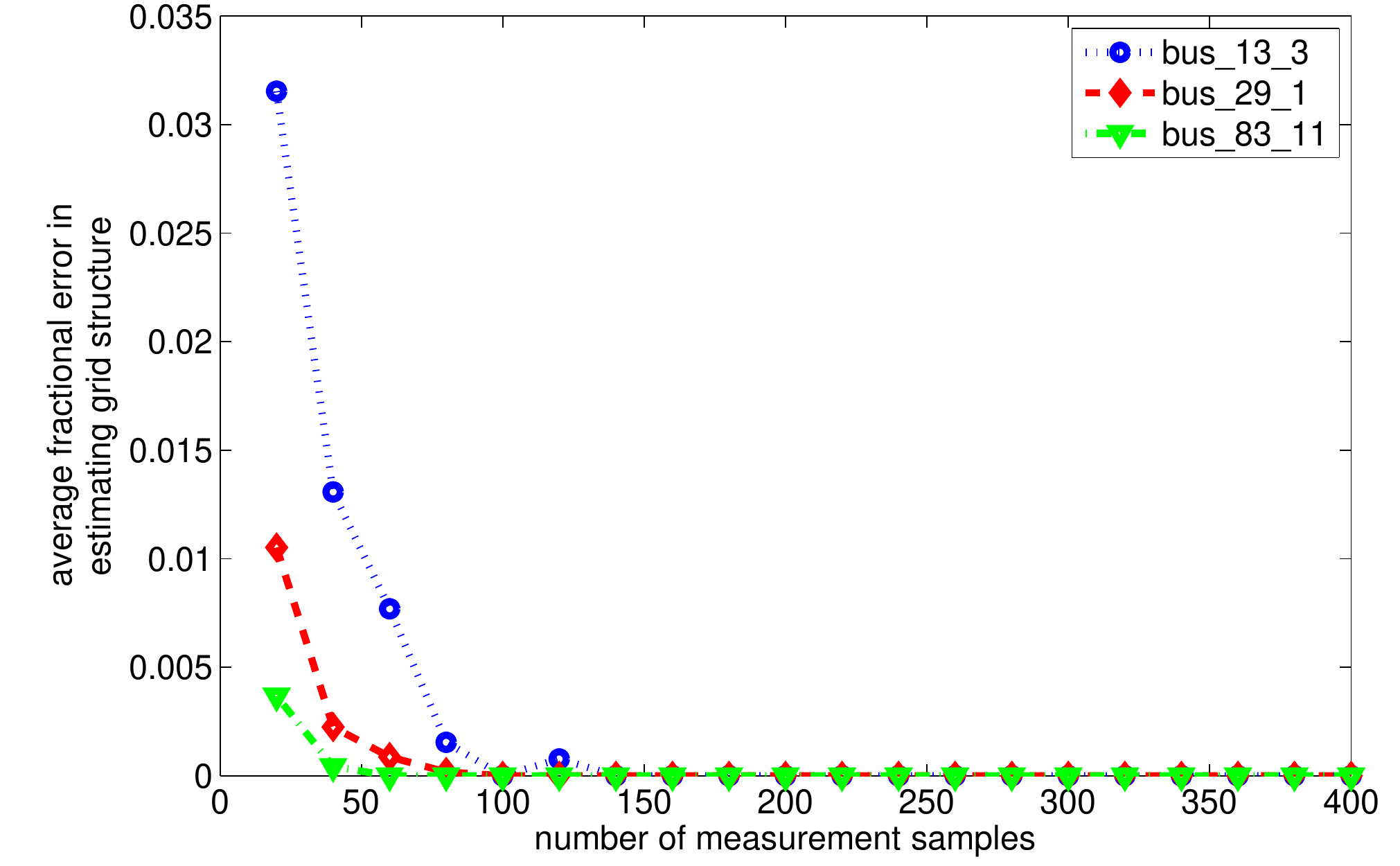}\label{fig:algo4adj}}
\squeezeup
\caption{Average fractional errors vs number of samples used in Algorithm $1$ for learning statistics of nodal injections and grid structure using Algorithm $1$. (a) Means of nodal power injection. (b) Covariances of nodal power injection. (c) Grid (forest/tree) reconstruction. The number of samples used for graph (forest/tree) reconstruction is moderate in comparison to the numbers used to estimate statistics.
\label{fig:algo4}}
\end{figure}

Next, we turn to discussing Algorithm $2$ that learns the grid structure from voltage magnitude measurements at a subset of the grid nodes. The actual covariance of the nodal injections of active and reactive powers is assumed known to the observer in this case. As described previously, we generate samples of the active and reactive injections, run power flows to generate samples of voltage magnitudes, but then erase samples before passing them to the observer. We study average fractional errors in learning the grid structure as a function of number of measurement samples. Note that averaging here is over both selection of the missing nodes and statistics of nodal injections. Fig.~\ref{fig:algo6case1}, Fig.~ \ref{fig:algo6case2} and Fig.~\ref{fig:algo6case3} show results for $bus\_13\_3$, $bus\_29\_1$ and $bus\_83\_11$ models respectively. Different curves within each Figure are generated using different number of missing nodes. As expected, the number of errors increases with increase in the number of missing nodes. The decay in the average fractional errors is exponential with increase in the number of samples.
\begin{figure}[!bt]
\centering
\subfigure[]{\includegraphics[width=0.42\textwidth,height = .36\textwidth]{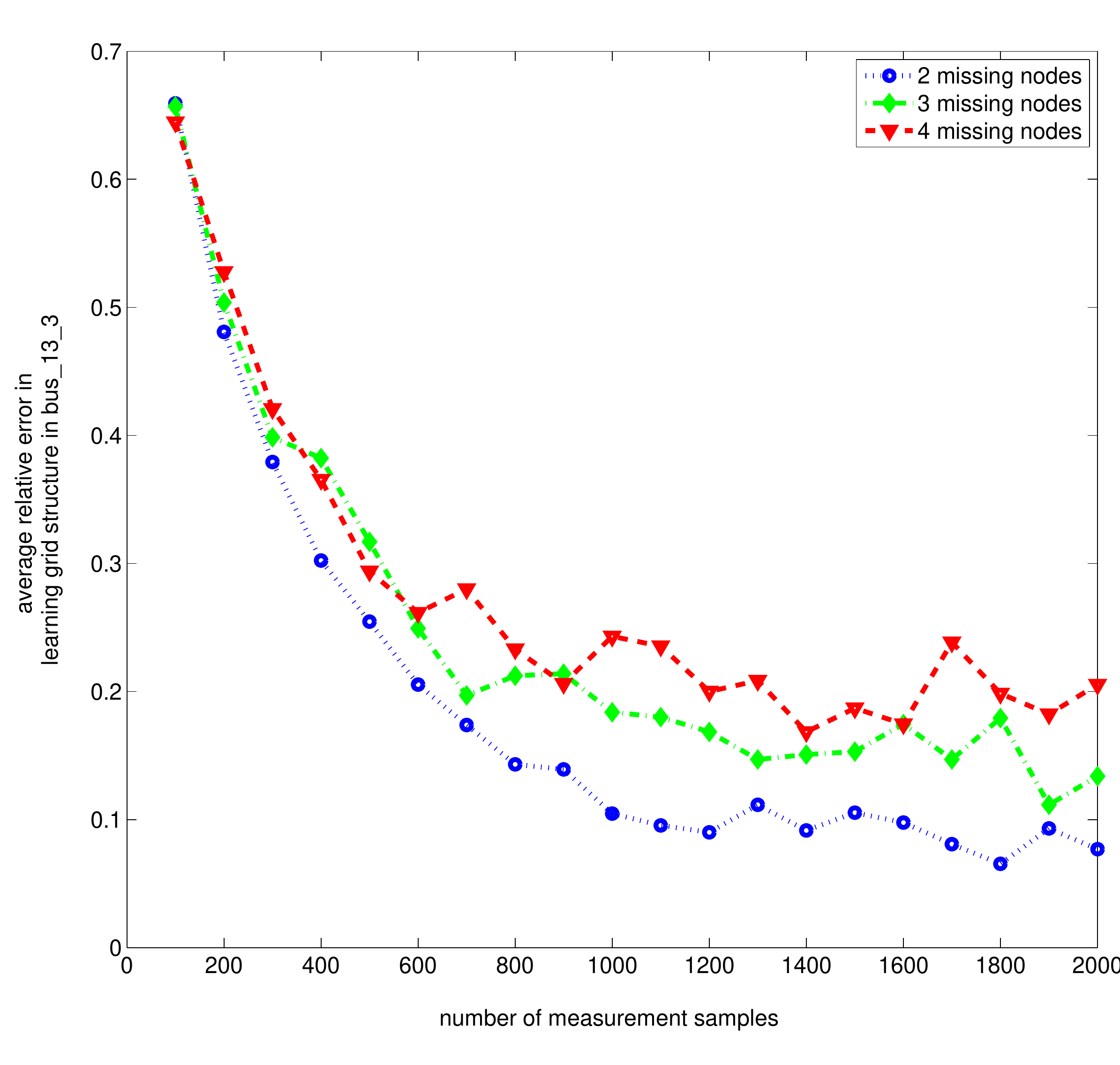}\label{fig:algo6case1}}
\subfigure[]{\includegraphics[width=0.42\textwidth,height = .37\textwidth]{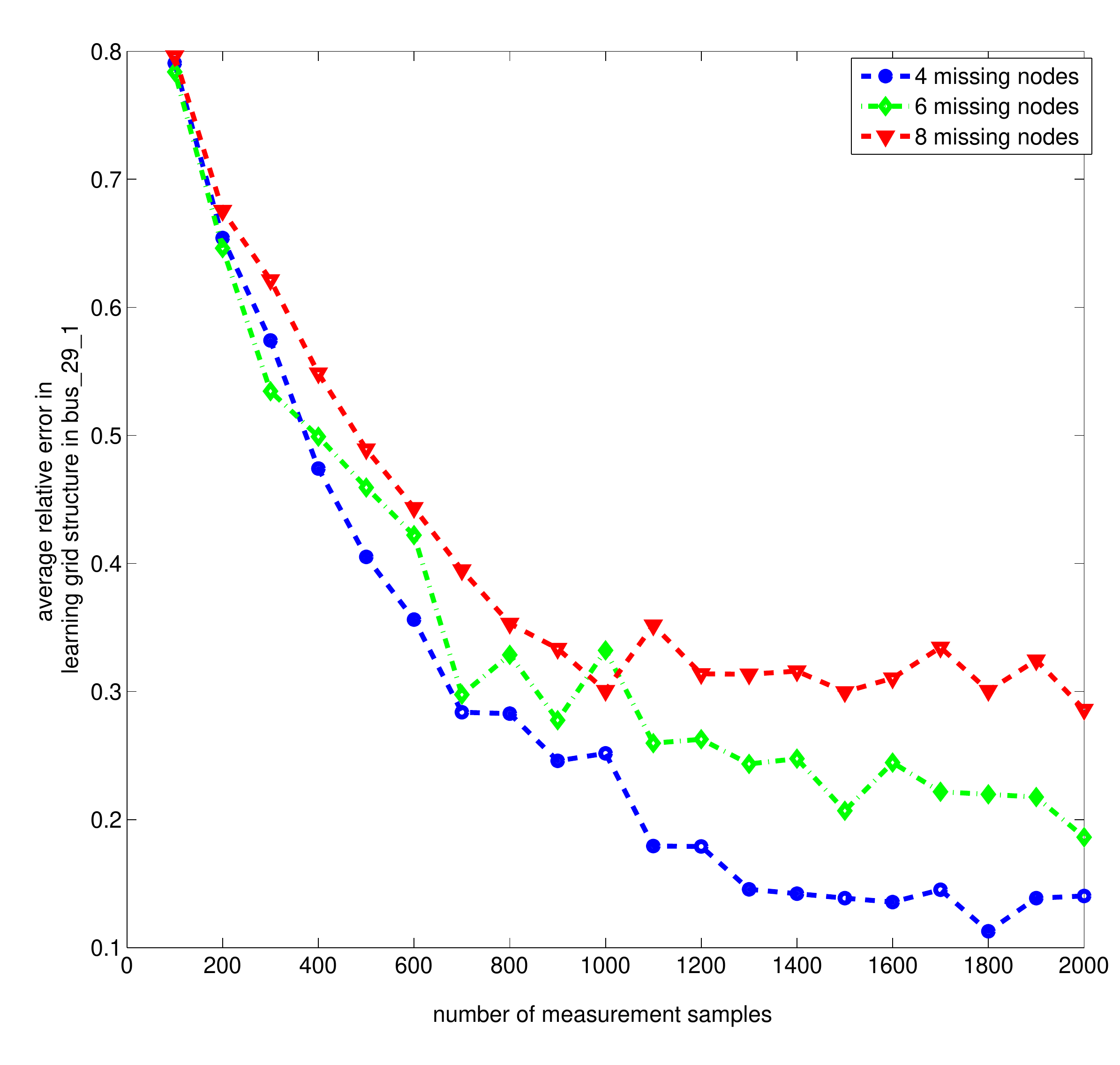}\label{fig:algo6case2}}
\subfigure[]{\includegraphics[width=0.42\textwidth,height = .37\textwidth]{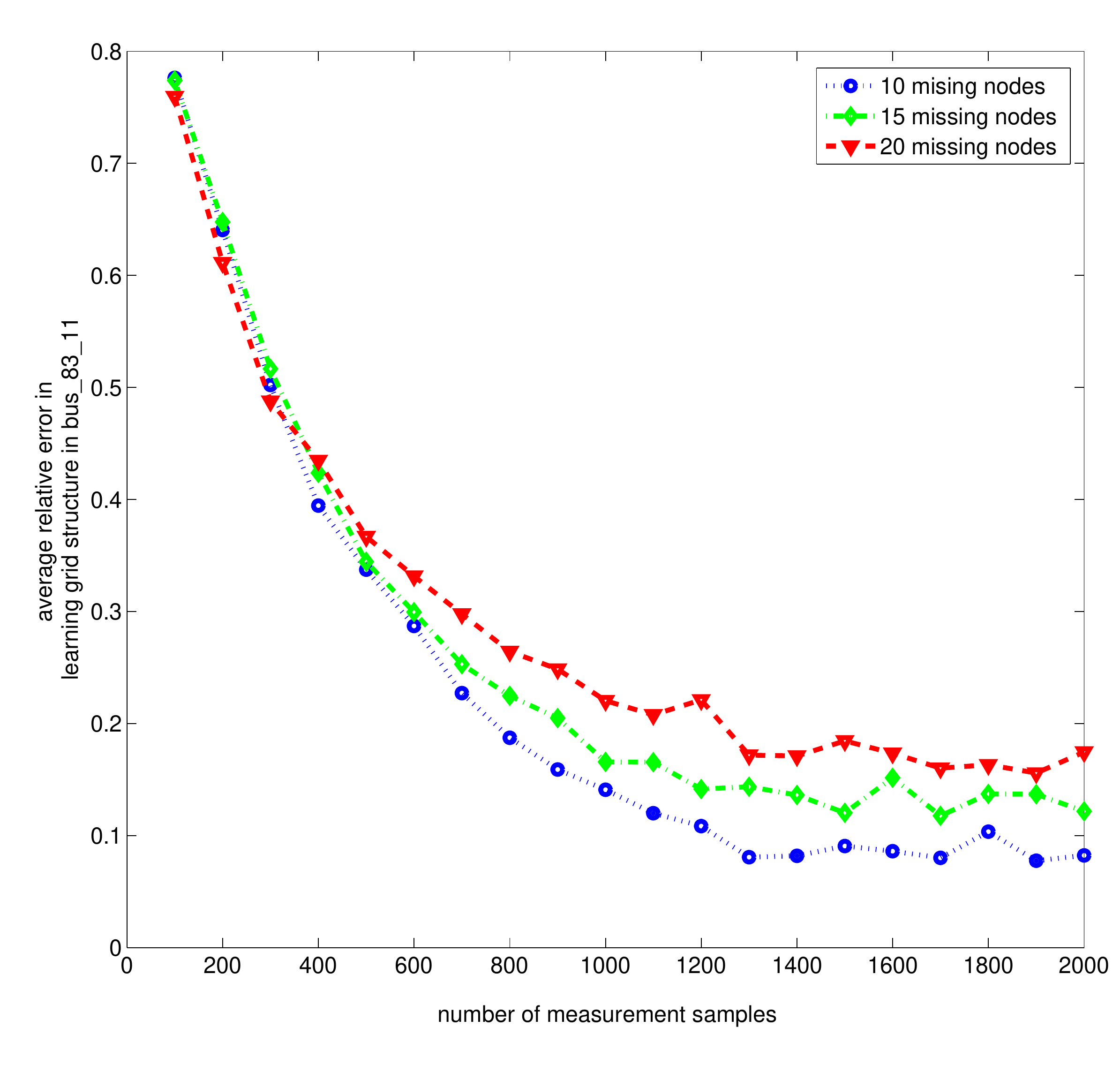}\label{fig:algo6case3}}
\squeezeup
\caption{Accuracy of Algorithm $2$ in learning the distribution grid structure vs number of samples in the presence of missing data for the test cases of (a) model $bus\_13\_3$, (b) model $bus\_29\_1$, and (c) model $bus\_83\_11$.
\label{fig:algo6}}
\end{figure}

\section{Conclusions}
\label{sec:conclusions}
We have considered three critical problems in radial distribution grids: learning the operational radial structure (Task $1$), inferring the nodal load statistics (Task $2$), and estimating the impedance parameters of operational lines (Task $3$). In Part I \cite{distgridpart1}, we have presented a polynomial time algorithm that uses nodal voltage magnitude samples, and available information on nodal injection statistics and line parameters to accomplish Task $1$. The algorithm is based on the assumption of second moments (of nodal power injections) positivity. In Part II, we have assumed independence of fluctuations in nodal injections instead and used it to develop a new polynomial time algorithm that solves Task $1$ coupled with either Task $2$ or Task $3$. Importantly, under our modified assumption, voltage magnitude measurements appear sufficient to learn the operational radial grid, even in the absence of any information on the line parameters or injection statistics. Availability  of the additional voltage phasor measurements is required to complete Tasks $2$ and $3$. Then, we have presented the second algorithm to estimate the grid structure for systems with incomplete observability, where voltage magnitude measurements for a set of missing nodes are not available. It is worth mentioning that the assumptions in Parts I and II of this paper, though different, simultaneously hold true for several realistic grids and time-scales. Moreover, neither assumption relies on any particular distribution for nodal injections. Performance of both Algorithms have been elucidated through simulations of a number of grid test cases. Apart from using these results to detect failures and also to improve load control, this work has key implications in related areas of non-intrusive control and quantification of measurement security and prevention of adversarial attacks. Learning the grid structure under generalized power flow models and related error analysis remain two interesting directions for future work.

\section*{Acknowledgment}

The work at LANL was funded by the Advanced Grid Modeling Program in the Office of Electricity in the US Department of Energy and was carried out under the auspices of the National Nuclear Security Administration of the U.S. Department of Energy at Los Alamos National Laboratory under Contract No. DE-AC52-06NA25396.

\bibliographystyle{IEEETran}
\bibliography{../../Bib/FIDVR,../../Bib/SmartGrid,../../Bib/voltage,../../Bib/trees}

\begin{IEEEbiography}[{\includegraphics[width=1in,height=2in,clip,keepaspectratio]{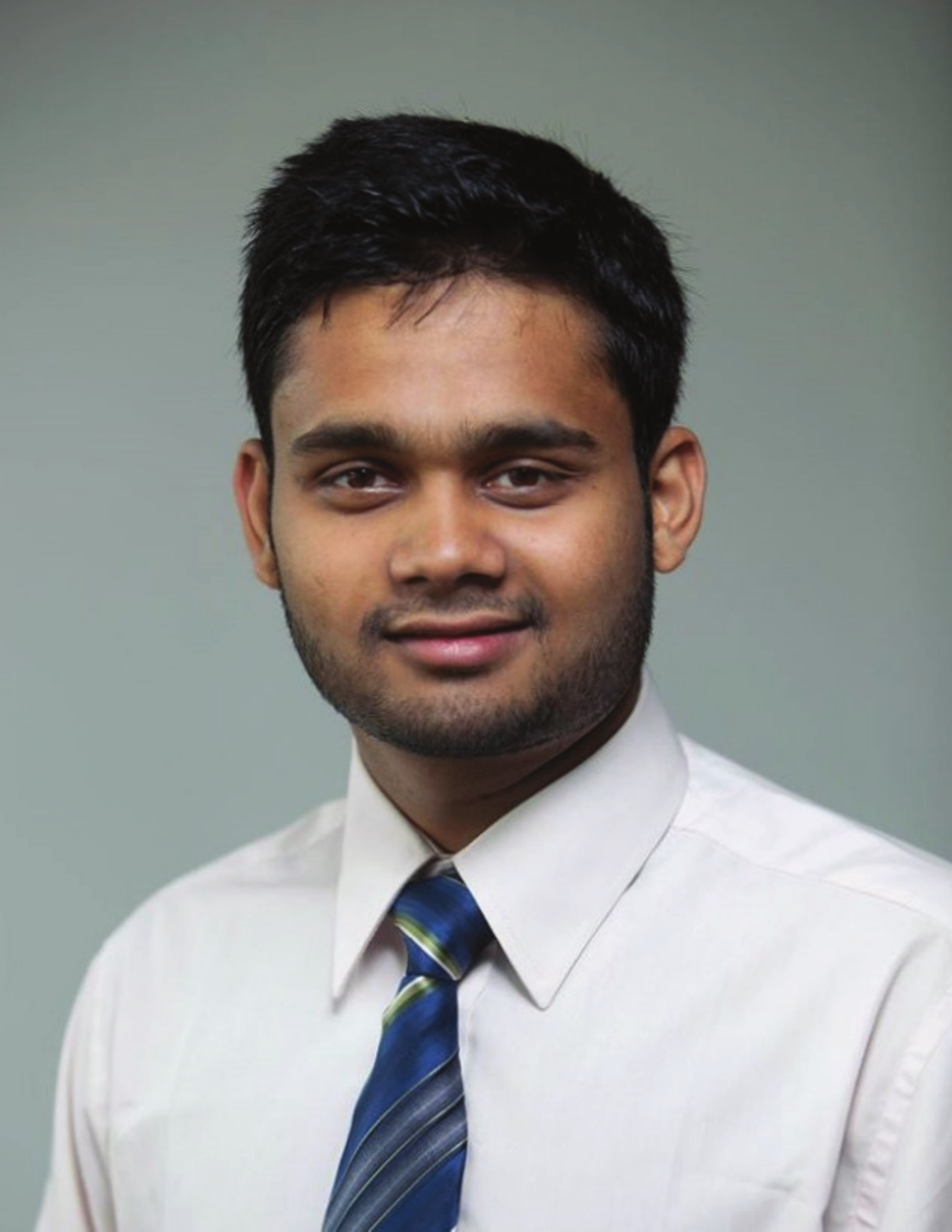}}]{Deepjyoti Deka}
Deepjyoti Deka received his M.S. in Electrical Engineering from University of Texas, Austin in 2011, and his B.Tech in Electronics and Communication Engineering from IIT Guwahati, India, in 2009 for which he was awarded the Institute Silver Medal. He is currently a PhD candidate in Electrical Engineering at UT Austin. His research focusses on the design and analysis of power grid structure, operations and data security. He is also interested in modeling and optimization in social and physical networks. He has held internship positions at Los Alamos National Lab, Los Alamos NM, Electric Reliability Council of Texas, Taylor TX, and Qualcomm Inc, San Diego CA.
\end{IEEEbiography}

\begin{IEEEbiography}[{\includegraphics[width=1in,height=2in,clip,keepaspectratio]{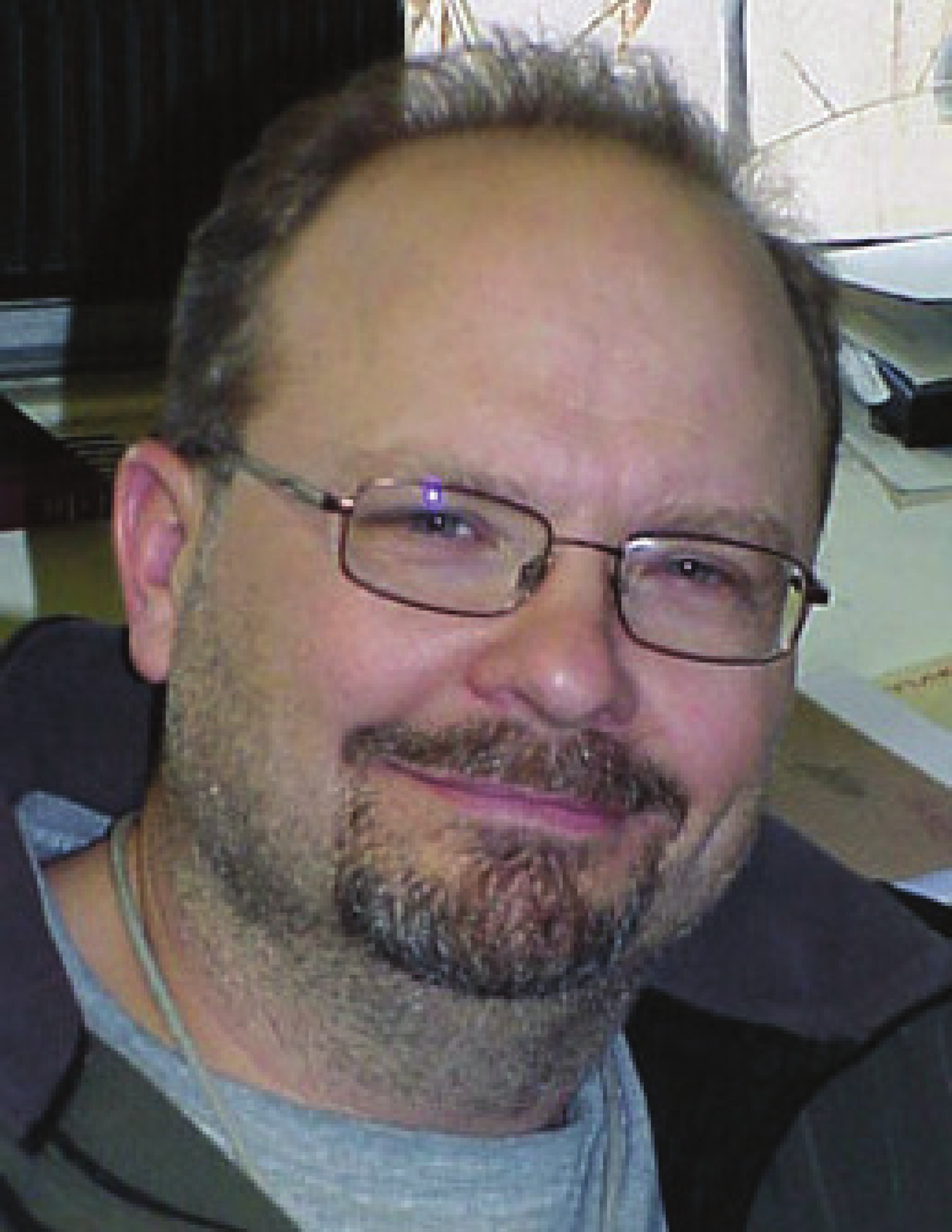}}]{ScottBackhaus}
Scott Backhaus received the Ph.D. degree in physics from the University
of California at Berkeley in 1997 in the area of experimental macroscopic
quantum behavior of superfluid He-3 and He-4.
In 1998, he came to Los Alamos, NM, was Director's Funded
Postdoctoral Researcher from 1998 to 2000, a Reines Postdoctoral
Fellow from 2001 to 2003, and a Technical Staff Member from 2003 to
the present. While at Los Alamos, he has performed both experimental
and theoretical research in the area of thermoacoustic energy conversion
for which he received an R\&D 100 award in 1999 and Technology
Review's Top 100 Innovators Under 35 [award in 2003]. Recently, his
attention has shifted to other energy-related topics including the fundamental
science of geologic carbon sequestration and grid-integration of
renewable generation.
\end{IEEEbiography}

\begin{IEEEbiography}[{\includegraphics[width=1in,height=1.25in,clip,keepaspectratio]{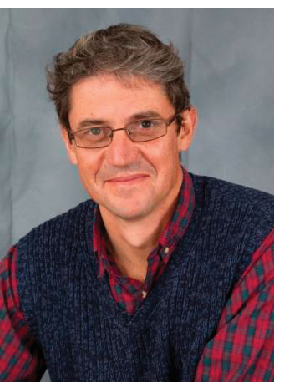}}]{Michael Chertkov}
Dr. Chertkov's areas of interest include statistical and
mathematical physics applied to energy and communication networks,
machine learning, control theory, information theory, computer science,
fluid mechanics and optics. Dr. Chertkov received his Ph.D. in
physics from the Weizmann Institute of Science in 1996, and his
M.Sc. in physics from Novosibirsk State University in 1990.
After his Ph.D., Dr. Chertkov spent three years at Princeton
University as a R.H. Dicke Fellow in the Department of Physics.
He joined Los Alamos National Lab in 1999, initially as a J.R.
Oppenheimer Fellow in the Theoretical Division. He is now a
technical staff member in the same division. Dr. Chertkov has
published more than 130 papers in these research
areas. He is an editor of the Journal
of Statistical Mechanics (JSTAT), associate editor of IEEE Transactions on
Control of Network Systems, a fellow of the American Physical
Society (APS), and a Founding Faculty Fellow of Skoltech (Moscow, Russia).
\end{IEEEbiography}

\end{document}